\newcommand{\wa}{\hat}
\newcommand{\cA}{\mathcal{A}}
\newcommand{\cB}{\mathcal{B}}
\newcommand{\cD}{\mathcal{D}}
\newcommand{\cF}{\mathcal{F}}
\newcommand{\cH}{\mathcal{H}}
\newcommand{\cK}{\mathcal{K}}
\newcommand{\cL}{\mathcal{L}}
\newcommand{\cO}{\mathcal{O}}
\newcommand{\cT}{\mathcal{T}}
\newcommand{\cBH}{\mathcal{BH}}
\newcommand{\rC}{\mathrm{C}}
\newcommand{\sA}{\mathscr{A}}
\newcommand{\sB}{\mathscr{B}}
\newcommand{\sE}{\mathscr{E}}
\newcommand{\sF}{\mathscr{F}}
\newcommand{\sH}{\mathscr{H}}
\newcommand{\sK}{\mathscr{K}}
\newcommand{\sBHm}{\sB \tilde \sH}
\newcommand{\sKH}{\sK \tilde \sH}
\newcommand{\sBH}{\mathscr{BH}}
\newcommand{\tr}{\triangleright}
\newcommand{\Si}{\Sigma} % Insieme simpliciale
\newcommand{\bC}{\mathbb{C}}
\newcommand{\bN}{\mathbb{N}}
\newcommand{\bR}{\mathbb{R}} % Insieme numeri reali
\newcommand{\bZ}{\mathbb{Z}}
\newcommand{\si}{\sigma}
\newcommand{\eps}{\varepsilon}
\newcommand{\e}{\epsilon}
\DeclareMathAlphabet\EuFrak{U}{euf}{m}{n}	%  the Bold Euler Fraktur
\SetMathAlphabet\EuFrak{bold}{U}{euf}{b}{n}	%  gothic font
\newcommand {\ovl}{\overline}
\newcommand{\ad}{\mathrm{ad}}
\author{\textsc{Giuseppe Ruzzi$^{1}$ and Ezio Vasselli$^{2}$}
\footnote{Both the authors are supported by the  EU network ``Noncommutative Geometry" MRTN-CT-2006-0031962.}\\
  \null\\
\small{$^{1}$Dipartimento di Matematica, Universit\`a di Roma ``Tor Vergata'',}\\
\small{Via della Ricerca Scientifica, I-00133 Roma,  Italy.  \texttt{ruzzi@mat.uniroma2.it}} \\
\small{$^{2}$Dipartimento di Matematica, Universit\`a di Roma ``La Sapienza'',}\\
\small{Piazzale Aldo Moro 5, I-00185 Roma, Italy.  \texttt{ ezio.vasselli@gmail.com  }}\\
}
\title{\textsc{The $C_0(X)$-algebra of a net and index theory}}
\begin{document}
\maketitle

\begin{abstract}
Given a connected and locally compact Hausdorff space $X$ with a good base $K$
we assign, in a functorial way, 
a $C_0(X)$-algebra to any precosheaf of $\rC^*$-algebras $\cA$ defined over $K$.
Afterwards we consider the representation theory and the Kasparov $K$-homology of $\cA$, 
and interpret them in terms, respectively, of the representation theory 
and the $K$-homology of the associated $C_0(X)$-algebra.
When $\cA$ is an observable net over the spacetime $X$ in the sense of algebraic quantum field theory,
this yields a geometric description of the recently discovered representations affected by the topology of $X$.
\end{abstract}

\tableofcontents
\markboth{Contents}{Contents}
\newpage

%%*********************************************************************

  \theoremstyle{plain}
  \newtheorem{definition}{Definition}[section]
  \newtheorem{theorem}[definition]{Theorem}
  \newtheorem{proposition}[definition]{Proposition}
  \newtheorem{corollary}[definition]{Corollary}
  \newtheorem{lemma}[definition]{Lemma}

  \theoremstyle{definition}
  \newtheorem{remark}[definition]{Remark}
    \newtheorem{example}[definition]{Example}

\theoremstyle{definition}
  \newtheorem{ass}{\underline{\textit{Assumption}}}[section]

%*************************************************************
% \tableofcontents
% \markboth{Contents}{Contents}
%*************************************************************

\numberwithin{equation}{section}

\section{Introduction}

Let $X$ be a space and $K$ denote the partially ordered set ({\em poset})
given by a base of $X$ ordered under inclusion. In the present work we proceed on
studying the (noncommutative) geometric invariants of $X$ encoded by $K$, 
following the research line of \cite{RRV09}.

\

The interest on this question arises from the algebraic formulation 
of quantum field theory over the (possibly curved) spacetime $X$, 
an approach where the basic mathematical objects are a net of $\rC^*$-algebras over $K$
and the set of its Hilbert space representations of physical interest, 
called the {\em sectors} of the net.
Geometric effects on quantum systems, the most famous of which is the Aharonov-Bohm effect, 
are known since the late fifties,
nevertheless topological invariants have been discovered in the analysis of sectors 
only in recent times (see \cite{BR09,CHKL10,Ruz05,BFM09}),
and since these are expressed in terms of the poset rather than the space, 
the question of representing them as genuinely geometric objects arises.

\

Nets of $\rC^*$-algebras appeared in quantum field theory in terms
of pairs $(\cA,\jmath)_K$, where $\cA := \{ \cA_Y \}$ is a family of $\rC^*$-algebras
indexed by the elements of $K$ and 
$\{ \jmath_{Y'Y} : \cA_Y \to \cA_{Y'}$, $Y \subseteq Y' \}$
is a family of $*$-monomorphisms fulfilling the relations 
\[
\jmath_{Y''Y'} \circ \jmath_{Y'Y} = \jmath_{Y''Y} \  , \qquad   Y \subseteq Y' \subseteq Y'' \ ,
\]
the basic idea being that each $\cA_Y$ is the $\rC^*$-algebra of quantum observables localized
in the region $Y \subset X$.
This notion can clearly be given in other categories, as the ones of (topological)
groups and Hilbert spaces; in recent times, a particular attention has been
given to {\em net bundles}, i.e. nets such that every $\jmath_{Y'Y}$ is an
isomorphism (\cite{RRV09}).

\

The reader experienced in algebraic topology can recognize that what we call a 
{\em net} is, as a matter of fact, a precosheaf
{\footnote{The term {\em net}, standard in algebraic quantum field theory, shall be used 
           in this paper as a synonym of {\em precosheaf}.}}, 
thus it is expectable that these objects actually 
encode non-trivial geometric invariants of $X$.
Some relevant results in this direction have been proved, showing a strong 
interplay at the level of homotopy. We just mention the following two facts:
(i)    The fundamental group $\pi_1(K)$ is isomorphic to the fundamental group $\pi_1(X)$ (\cite{Ruz05});
(ii)   The category of net bundles over $K$ is equivalent to the ones of locally constant bundles
       over $X$ and of $\pi_1(X)$-dynamical systems (\cite{RRV09,RRV12}). 
       In this last case the geometric meaning of the $\pi_1(X)$-action is given by holonomy 
       and is central in the theory of net bundles. 

\

In accord to the previous considerations we have a (noncanonical) equivalence between the category 
of $\rC^*$-net bundles and the one of locally constant $\rC^*$-bundles.
In the present work we show that any net of $\rC^*$-algebras
can be interpreted as a precosheaf of local sections of a $C_0(X)$-algebra.
This extends, this time in a canonical way, the equivalence proved at the level of
net bundles to a functor from the category of nets of $\rC^*$-algebras to the one
of $C_0(X)$-algebras. Consequently, the sectors studied in \cite{BR09,BFM09}
can be understood as $C_0(X)$-representations over locally constant bundles of Hilbert spaces. 
This will allow us to define, using the Cheeger-Chern-Simon character (\cite{CS85}), 
an additive characteristic class on the set of sectors generalizing the statistical dimension, 
a well-known invariant in algebraic quantum field theory (\cite{RVCS}).

\

As an application of the previous results we consider \emph{nets of Fredholm modules}, 
a generalization of a notion introduced in \cite[\S 6]{Lon01} to describe sectors in terms of the index theory:
we extract a geometric content from these objects, 
showing that they define cycles in the representable $K$-homology of the $C_0(X)$-algebra defined by $(\cA,\jmath)_K$.

\

The present paper is organized as follows.

In \S \ref{B} we describe the interplay between presheaves, nets and $C_0(X)$-algebras.

In \S \ref{sec_LUA} we define a functor from nets of $\rC^*$-algebras to $C_0(X)$-algebras.
The $C_0(X)$-algebra $\sA$ associated to a net $(\cA,\jmath)_K$ fulfils a universal 
property, namely the one of lifting any \emph{heteromorphism} from $(\cA,\jmath)_K$ to a 
$C_0(X)$-algebra, defined in a suitable way. This notion is central in our work and is introduced in 
\S \ref{ex.B2.01}.

In \S \ref{s.index} we study nets of Fredholm modules and show that these can be interpreted as
continuous families of Fredholm operators, that is, cycles in the representable $K$-homology $RK^0(\sA)$
defined by Kasparov in \cite{Kas88} (see Appendix \S \ref{KK}).

Finally, in \S \ref{sec.ex} we give some classes of examples, first in the setting of abstract 
$\rC^*$-algebras and then in the one of quantum field theory.

\section{Preliminaries}
\label{A}

In this section we recall some background notions relative to nets of $\rC^*$-algebras
and $C_0(X)$-algebras.

\subsection{Nets of $\rC^*$-algebras}
\label{sec.nets}

We give some recent results on nets of $\rC^*$-algebras. 
Details can be found in \cite{RV12}. 

\paragraph{Posets.} Let $(K,\leq)$ be a partially ordered set (\emph{poset}).  
We shall denote the elements of $K$ by Latin letters $o,a$. We shall write 
$a<o$ to indicate that $a\leq o$ and $a\ne o$.
A poset $K$ is said to be \emph{upward (downward) directed} whenever for any pair  $o_1,o_2\in K$ 
there is $o\in K$ with $o_1,o_2\leq o$ ($o\leq o_1,o_2$). 
If $a\in K$ and $\omega\subseteq K$, then we write $a\leq \omega$ if, and only if, 
$a\leq o$ for any $o\in\omega$. \\ 
\indent The notions of pathwise connectedness and the homotopy equivalence relation can be introduced on an abstract poset $K$
in terms of a simplicial set associated to $K$ \cite{Rob90,RRV09}. This leads to the first homotopy group $\pi^a_1(K)$ of $K$ with respect to a base element $a\in K$. However, for the purposes of the present paper, 
it is not necessary to introduce the explicit definitions in terms of the of the simplicial set. It is enough to recall the following facts:  
if $K$  is pathwise connected, then $\pi_1^a(K)$ 
does not  depend, up to isomorphism, on the choice of the base point $a$; 
this isomorphism class, written $\pi_1(K)$, is the fundamental group of $K$;  
$K$ is said to be {\em simply connected} whenever $K$ is pathwise connected and $\pi_1(K)$ is trivial.  
In particular, if $K$ is either upward  or  downward directed then $K$ is  simply connected.\\ 
\indent In the present paper \emph{when $K$ is a not specified poset we shall always assume that it is pathwise connected}.\\ 
\indent Actually, the posets we shall deal with in the present paper arise as basis $K$, ordered under inclusion, 
of a topological space $X$. In particular if  $X$ is a locally compact, path-connected Hausdorff space, 
then we may choose as the poset 
$K$ a base of $X$, ordered under inclusion, of relatively compact open subsets of $X$. 
Under these conditions, it turns out that the poset $K$ is pathwise connected. 
If, in addition, the elements of $K$ 
can be chosen simply connected, i.e.\ $K$ is a \emph{good base of $X$} (which $X$ amounts to saying that $X$ is a connected, 
locally compact and locally simply connected Hausdorff space) then 
for all $a \in K$ and $x \in a$, there is an \emph{isomorphism} 
\[
\pi_1^a(K) \to \pi^x_1(X)
\]
(see \cite{Ruz05}).
Connected manifolds, that are the class of spaces 
in which we are interested in, fulfill all the above properties.

%***************************************************************************************

\paragraph{Nets of $\rC^*$-algebras.} A {\em net of $\rC^*$-algebras} over a poset $K$ is  a pair
$(\cA,\jmath)_K$, where $\cA$ is a collection of unital
$\rC^*$-algebras $\cA_o$, $o \in K$, called the \emph{fibres} of the net, and 
$\jmath$  is a collection of unital $^*$-monomorphisms 
$\jmath_{o'o}:\cA_o\to\cA_{o'}$, for any $o\leq o' $, 
the \emph{inclusion morphisms}, fulfilling  the {\em net relations}
\begin{equation}
\label{net:rel}
\jmath_{o''o'} \circ \jmath_{o'o} = \jmath_{o''o}
 \ , \qquad  o \leq o' \leq o''
\ .
\end{equation}
 If $S \subset K$ and $(\cA,\jmath)_K$ is a net then $(\cA,\jmath)_S$
is itself a net, called the {\em restriction} of $(\cA,\jmath)_K$ to $S$.\\ 
\indent  The easiest example of a net of $\rC^*$-algebras  is that of \emph{the constant net} of $\rC^*$-algebras which is naturally associated to a $\rC^*$-algebra $A$ (that is, $\jmath_{o'o} \equiv id_A$, $\forall o \leq o'$).  
We shall denote this net by the same symbol $A$ as that denoting the constant fibre. 
The \emph{vanishing net}   is a constant net $A$ 
such that $A=0$. \\
\indent A {\em morphism} $\phi : (\cA,\jmath)_K \to (\cB,y)_K$ is a family
of $^*$-morphisms $\phi_o : \cA_o \to \cB_o$, for any $o \in K$,
such that
\begin{equation}
\label{def.netmor}
\phi_{o'} \circ \jmath_{o'o} = y_{o'o} \circ \phi_o
 \ , \qquad   o \leq o'
\ .
\end{equation}
We say that $\phi$ is a {\em morphism into a $\rC^*$-algebra} in the case that 
the codomain net $(\cB,y)_K$ is a constant net $B$. We denote these morphisms 
by $\phi:(\cA,\jmath)_K \to B$. We say that $(\cA,\jmath)_K$ is \emph{trivial} if it is isomorphic 
to a constant net. \smallskip 

%
%\begin{remark}
%\label{sec.nets3}
%The \emph{universal $\rC^*$-algebra} is by definition the $\rC^*$-algebra $\vec{A}$ lifting
%any $\rC^*$-algebra morphism of  $(\cA,\jmath)_K$: that is, there is a canonical morphism
%%
%$\e :(\cA,\jmath)_K \to \vec{A}$
%%
%such that for any
%%
%$\phi:(\cA,\jmath)_K \to B$
%%
%there is $\phi^\uparrow : \vec{A} \to B$ with $\phi = \phi^\uparrow \circ \e$.
%%
%This notion has been introduced by Fredenhagen (\cite{Fre90}),
%as a tool for analyzing the superselection structure of nets in conformal field theory.
%\end{remark}

From the categorical point of view a net of $\rC^*$-algebras is a functor from a poset, considered as a category, 
to the category of unital $\rC^*$-algebras;  morphisms of nets of $\rC^*$-algebras are natural transformations of the corresponding functors.  In what follows we shall deal with nets taking values in other target categories. 
A net of \emph{Banach spaces}  is a pair $(\cH,V)_K$, where $\cH = \{ \cH_o \}$ is a family of Banach spaces and
$V := \{ V_{o'o} \}_{o' \geq o}$
is a family of injective, bounded operators fulfilling the net relations. 
In particular, when each fibre $\cH_o$, $o \in K$, is a Hilbert space and
any operator $V_{o'o}:\cH_o\to \cH_{o'}$, $o'\geq o$, is an isometry we say that $(\cH,V)_K$ is a net of \emph{Hilbert spaces}.

%************************************************************

\paragraph{Net bundles.}
When every inclusion morphism  of a net $(\cA,\jmath)_K$  is invertible 
we say that $(\cA,\jmath)_K$ is a {\em $\rC^*$-net bundle}.  Since the morphisms $\jmath_{o'o}$, $o \leq o'$, are invertible, 
it makes sense to consider the inverses $\jmath_{o'o}^{-1}$ and 
to be concise we will write 
$\jmath_{oo'} := \jmath_{o'o}^{-1}$.\\ 
\indent As a consequence of pathwise connectedness of $K$ and of the invertibility of the inclusion morphisms, any
$\rC^*$-net bundle  is, up to isomorphism, of the form $(A,\jmath)_K$ i.e. a net bundle whose 
fibres are all equal to a unique $\rC^*$-algebra $A$, called the \emph{standard fibre},  and whose 
inclusion maps $\jmath_{\tilde o o}$  are \emph{automorphisms} of $A$ (see \cite[\S 3.3]{RV12} for details). \\
\indent In the same way we will talk about Banach and Hilbert net bundles,
where, in the latter case, the inclusion morphisms are unitary operators.

\begin{example}
Let $S \subset \bC$ be a finite set and $X' \subset \bC$ a regular, bounded
open subset such that $S \subset X'$. 
We set $X := X'- S$ and fix a good base $K$ for $X$.
For each $Y \in K$ we consider the Banach space
$\cO^S_Y$
of functions holomorphic in $Y$ having a meromorphic extension to $X$ such that 
the set of poles is contained in $S$.
In this way we get the Banach net bundle $(\cO^S,\jmath)_K$, where 
$\jmath_{Y'Y}$, $Y \subseteq Y'$, is the (non-isometric) operator defined by 
analytic continuation.
\end{example}
%

%******************************************************************************

\paragraph{Representations.} 
A {\em representation} of $(\cA,\jmath)_K$  \emph{into a Hilbert net bundle} $(\cH,U)_K$ is a  morphism 
\begin{equation}
\label{rep}
\pi : (\cA,\jmath)_K \to (\cBH,\ad U)_K \ ,
\end{equation}
where $(\cBH,\ad U)_K$ is the $\rC^*$-net bundle defined by $(\cH,U)_K$ having 
fibres the $\rC^*$-algebra of bounded operators $B(\cH_o)$, $o \in K$, and net structure
defined by adjoint action of the $U_{o'o}$'s. We say that $\pi$ is  \emph{faithful} whenever 
$\pi_o$ is a faithful representation of $\cA_o$ on $\cH_o$ for any $o\in K$, and say that 
$\pi$ is trivial whenever $\pi_o=0$ for any $o$.  \\
\indent Nets of $\rC^*$-algebras are classified according to their representations. A net of $\rC^*$-algebras 
is said to be \emph{non-degenerate} if it admits non-trivial representations, and \emph{injective} 
if it admits faithful representations. Examples of nets exhausting the above classification can be found in \cite{RV12}. See 
\cite{RV12bis} for examples of injective nets remarkable for conformal quantum field theory.   

\begin{remark} 
In the context of  the Algebraic Quantum Field Theory it is customary to consider Hilbert space representations, 
i.e. morphisms from $(\cA,\jmath)_K$ into the single $\rC^*$-algebra $B(H)$. It was only recently that the more general notion 
of representation (\ref{rep}) has been considered \cite{BR09,BFM09}. These two notions agree when the net is defined on 
a simply connected poset. In the non simply connected case there are examples 
of nets having faithful representations but no non-trivial Hilbert space representations 
\cite[Ex.A.9, Ex.5.8(i)]{RV12}. 
\end{remark}

\paragraph{The universal $\rC^*$-algebra and the enveloping net bundle.} 
The \emph{universal $\rC^*$-algebra} is by definition the $\rC^*$-algebra $\vec{A}$ lifting 
any $\rC^*$-algebra morphism of  $(\cA,\jmath)_K$: that is, there is a canonical morphism
$\e :(\cA,\jmath)_K \to \vec{A}$
such that for any
$\phi:(\cA,\jmath)_K \to B$
there is a unique $\phi^\uparrow : \vec{A} \to B$ with $\phi = \phi^\uparrow \circ \e$.
This notion has been introduced by Fredenhagen (\cite{Fre90}),
as a tool for analyzing the superselection structure of nets in conformal field theory. \\
\indent The representation theory of a net is not completely encoded in the universal $\rC^*$-algebra since \emph{only} Hilbert space representations of the net lift  to the universal $\rC^*$-algebra (see previous remark). However it turns out that 
to any net of $\rC^*$-algebras $(\cA,\jmath)_K$ corresponds a $\rC^*$-net bundle $(\bar\cA,\bar\jmath)_K$, the 
\emph{enveloping net bundle} lifting, as above,  a net bundle morphism of  $(\cA,\jmath)_K$. In particular, representations of the net 
are in 1-1 correspondence with those of the enveloping net bundle. \\
\indent The universal $\rC^*$-algebra \emph{forgets} the topology of the poset $K$ (the fundamental group), information which is 
naturally encoded in the enveloping net bundle. Actually when the poset is simply connected these two notions agree: the standard fibre  $\bar{\cA}_o$ of the enveloping net bundle is isomorphic to the universal $\rC^*$-algebra $\vec{\cA}$ (for details, see \cite{RV12}).\\ 
\indent In the present paper we shall use the notion of universal $\rC^*$-algebra to construct the fibres
of our $C_0(X)$-algebras: given $x \in X$ and the downward directed poset 
$\omega_x := \{ Y \in K : x \in Y \}$
(which is simply connected, see \cite[\S 3.2]{RRV09}) we shall define our fibre on $x$
as the universal $\rC^*$-algebra of the restricted net
$(\cA,\jmath)_{\omega_x}$.

\subsection{$C_0(X)$-algebras and the homotopy group}
\label{sec.lc}

Let $X$ be a locally compact Hausdorff space. 
A {\em $C_0(X)$-algebra} is a $\rC^*$-algebra $\sB$ having a nondegenerate 
morphism from $C_0(X)$ to the centre of the multiplier algebra $M \sB$. 
Assuming for simplicity that this morphism is faithful we identify 
elements of $C_0(X)$ with their image in $M \sB$ and write
$fT \in \sB$, $f \in C_0(X)$, $T \in \sB$.
A $\rC^*$-morphism $\eta$ from $\sB$ to the $C_0(X)$-algebra $\sB'$ is said to be a
$C_0(X)$-morphism whenever 
$\eta(fT) = f \eta(T)$,
$\forall f \in C_0(X)$,
$T \in \sB$.
Let $C_x(X)$ denote the ideal of functions vanishing on $x \in X$; then the closed
linear span $C_x(X)\sB$ generated by elements of the type $fT$, $f \in C_x(X)$, $T \in \sB$,
is a closed ideal of $\sB$ and this yields the family of $\rC^*$-quotients
\begin{equation}
\label{def.lc.0}
r^x : \sB \to \sB_x := \sB / \{ C_x(X)\sB \} 
\ , \
x \in X
\ .
\end{equation}
It can be proved that the {\em norm function}
$n_T(x) := \| r^x(T) \|$, $x \in X$,
is upper semicontinuous and vanishes at infinity for every $T \in \sB$ \cite{Nil96,Bla96},
and we say that $\sB$ is a {\em continuous $\rC^*$-bundle} whenever $n_T$
is continuous for any $T \in \sB$.
Defining
\begin{equation}
\label{def.lc.1}
r(T) \ := \ \{ r^x(T) \} \in \prod_x \sB_x
\ \ , \ \
\forall T \in \sB
\ ,
\end{equation}
yields an immersion $\sB \subset \prod_x \sB_x$, and this leads from 
the formalism of $C_0(X)$-algebras to the one originally used by Dixmier 
and Douady (see \cite[Chap.10]{Dix}).

When $X$ is compact analogous considerations hold (without requiring the
property of vanishing at infinity for elements of $\sB$), 
and we use the term {\em $C(X)$-algebra}.

%*********

\paragraph{Locally constant bundles.}
Let now $X$ be paracompact and $\sB$ a $C_0(X)$-algebra.
For any $Y \subset X$ we consider the closed ideal $C_0(Y)\sB \subset \sB$.
Given the open cover $\{ X_i \}_{i \in I}$ and a $\rC^*$-algebra $B$, 
an \emph{atlas} of $\sB$ is given by a family $\eta$ of $C_0(X_i)$-isomorphisms
\[
\eta_i : C_0(X_i) \sB \stackrel{\simeq}{\longrightarrow} C_0(X_i) \otimes B 
\ \ , \ \
i \in I
\ ,
\]
and defines $C_0(X_i \cap X_j)$-isomorphisms
\[
\alpha_{ij} \ := \ \eta_i \circ \eta_j^{-1}  : C_0(X_i \cap X_j) \otimes B \to C_0(X_i \cap X_j) \otimes B \ ,
\]
that we regard as maps
$\alpha_{ij} : X_i \cap X_j \to \textbf{aut}B$.

We say that $\sB$ is \emph{locally constant} whenever it has an atlas $\eta$ such that any
$\alpha_{ij}$
is constant on the connected components of $X_i \cap X_j$.
In the sequel we will denote a locally constant $\rC^*$-bundle by $(\sB,\eta)$. 
Let now $(\sB',\eta')$ denote a locally constant $\rC^*$-bundle, 
and $\phi : \sB \to \sB'$ a $C_0(X)$-morphism. Then any
\[
\phi_{i'i} := \eta'_{i'} \circ \phi \circ \eta_i^{-1}
\ : \
C_0( X_i \cap X'_{i'} ) \otimes B \to C_0( X_i \cap X'_{i'} ) \otimes B' 
\]
can be regarded as a continuous map
\begin{equation}
\label{phiij}
\phi_{i'i} : X_i \cap X'_{i'} \to \textbf{hom}(B,B')
\ ,
\end{equation}
where $\textbf{hom}(B,B')$ is the space of morphisms
from $B$ into $B'$ endowed with the pointwise convergence topology. 
We say that $\phi$ is {\em locally constant} whenever each 
$\phi_{i'i}$ is a locally constant map (that is, $\phi_{i'i}$ is constant on any connected component), 
and in this case we write
\[
\phi : (\sB,\eta) \to (\sB',\eta')
\ .
\]
Now in general a $C_0(X)$-morphism is not locally constant, 
so locally constant $C_0(X)$-algebras form a {\em non-full} subcategory of the one of $C_0(X)$-algebras.
The notion of locally constant bundle can be given for generic spaces, 
in particular topological groups and Hilbert (Banach) spaces (\cite[\S I.2]{Kob}); 
this is indeed the common setting of this notion, rather than the one of $\rC^*$-algebras.
\smallskip

\begin{remark}
\label{rem.b0}
Let $A$ be a unital $\rC^*$-algebra. Then applying \cite[Theorem 31]{RRV09} to $G = {\bf aut}A$
we obtain isomorphisms
\[
\textbf{lc}(X,A) \ \simeq \ \textbf{net}(K,A) \ \simeq \ \textbf{dyn}(\pi_1(X),A) \ ,
\]
where:
\begin{itemize}
\item[(i)] $\textbf{lc}(X,A)$ is the set of isomorphism classes of locally constant 
$\rC^*$-bundles over $X$ with fibre (isomorphic to) $A$;
\item[(ii)]  $\textbf{net}(K,A)$ is the set of isomorphism classes of  
$\rC^*$-bundles over $K$ with fibre $A$;
\item[(iii)]  $\textbf{dyn}(\pi_1(X),A)$ is the set of equivalence classes,
under adjoint action of ${\bf aut}A$, of actions of $\pi_1(X)$ on $A$.
\end{itemize}
An analogous result holds in the setting of Hilbert spaces, by replacing $A$
with a Hilbert space $H$ and ${\bf aut}A$ with the unitary group.
\end{remark}

\begin{example}
Let $A$ be a $\rC^*$-algebra with arcwise connected automorphism 
group and $X$ a space with a good base $K$. 
We denote the set of isomorphism classes of locally trivial, continuous 
$\rC^*$-bundles with fibre $A$ by
$\textbf{bun}(X,A)$. 
In accord to \cite[\S 7]{RRV09} and \cite[\S 3.13]{Kar}, in the case of
the $n$-spheres $S^n$, $n \in \bN$, we find
\[
\left\{
\begin{array}{ll}
\textbf{lc}(S^1,A)  \simeq \textbf{aut}^\ad A 
\ , \
\textbf{bun}(S^1,A) \simeq \pi_0(\textbf{aut}A) = \{ 0 \}
\\
\textbf{lc}(S^n,A)   \simeq \{ 0 \}
\ , \
\textbf{bun}(S^n,A) \simeq \pi_{n-1}(\textbf{aut}A)
\ \ , \ \
n \geq 2
\ ,
\end{array}
\right.
\]
where $\textbf{aut}^\ad A$ is the orbit space of $\textbf{aut}A$
under the adjoint action.
\end{example}

\section{Nets and presheaves of $\rC^*$-algebras}
\label{B}

In this section we introduce some objects that will be used to 
connect nets with $C_0(X)$-algebras. 
First, we define presheaves of $\rC^*$-algebras over posets, which
become presheaves in the usual sense in the case of the presheaf
of local sections of a $C_0(X)$-algebra. 
Then we introduce the notion of heteromorphism from a net to a $C_0(X)$-algebra: 
this will give the model of the way in which $(\cA,\jmath)_K$ is connected to $\sA$.\smallskip 

\emph{Unless otherwise stated, we will assume from now on that $X$ is a locally compact, path-connected Hausdorff space and take as poset $K$ a base of $X$, ordered under inclusion, of open relatively compact open subsets of $K$. Recall that this implies that $K$ is pathwise connected poset, see Section \ref{sec.nets}.}

\subsection{Presheaves of $\rC^*$-algebras}
\label{s.B.2}

%A {\em relaxed net of $\rC^*$-algebras} $(\cA,\jmath)_K$ is defined in the same 
%way as a net of $\rC^*$-algebras, with the difference that the fibres $\cA_o$, $o \in K$,
%and the $^*$-monomorphisms $\jmath_{o'o}$, $o \leq o'$, are not necessarily unital.
%%
%A morphism between relaxed nets of $\rC^*$-algebras is defined in the same 
%way of usual nets of $\rC^*$-algebras.
%

A {\em presheaf} of $\rC^*$-algebras is given a the triple $(\cA,r)^K$, 
where $K$ is a poset, $\cA := \{ \cA_o \}$ a family of $\rC^*$-algebras and a family 
$r:= \{ r_{oo'} : \cA_{o'} \to \cA_o \ , \ o \leq o' \}$,
$^*$-morphisms, called  {\em restriction morphisms},  fulfilling 
{\em the presheaf relations}
\begin{equation}
\label{eq.psheaf}
r_{oo'} \circ r_{o'o''} = r_{oo''}
\ \ , \ \   o \leq o' \leq o''
\ .
\end{equation}
A {\em presheaf morphism} $\phi : (\cA,r)^K \to (\cA',r')^K$ 
is a family of $^*$-morphisms
$\phi = \{ \phi^o \in (\cA_o,\cA'_o) \}$
fulfilling 
$\phi^o \circ r_{oa} = r'_{oa} \circ \phi^{a}$, for any 
$ o \leq a$.

\begin{remark}
When $K$ is a base for the topology of a completely regular space, 
presheaves of $\rC^*$-algebras are, in essence, in one-to-one correspondence with 
$C_0(X)$-algebras: the basic idea is that the given presheaf can be regarded as 
\emph{the} presheaf of local sections of a topological $\rC^*$-bundle, see \cite{AM,Hof}.
\end{remark}

%*********

\paragraph{Heteromorphisms.}
Let $(\cA,\jmath)_K$ be a net and $(\cB,r)^K$ a presheaf.
A {\em heteromorphism} from $(\cA,\jmath)_K$ to $(\cB,r)^K$ is given by 
a family $\pi = \{ \pi_o : \cA_o \to \cB_o \}$ of $\rC^*$-morphisms 
fulfilling
\begin{equation}
\label{eq.B2.01}
r_{oo'} \circ \pi_{o'} \circ \jmath_{o'o} = \pi_o
\ \ , \ \  o \leq o'
\ .
\end{equation}
In the sequel, heteromorphisms  shall be denoted by
\[
\pi : (\cA,\jmath)_K \nrightarrow (\cB,r)^K \ ,
\]
emphasizing the "wrong-way functoriality" of the notion.
Note that regarding $(\cA,\jmath)_K$ as a functor from $K$ into the category of $\rC^*$-algebras
and $(\cB,r)^K$ as a cofunctor we find that $\pi$ can be interpreted as a natural transformation. 
When each $r_{oo'}$ is invertible we have the $\rC^*$-net bundle
$(\cB,r^{-1})_K$ and we can regard $\pi$ as a morphism of nets;
thus the idea of heteromorphism into a presheaf is a natural generalization
of the one of morphism into a $\rC^*$-net bundle. Finally, we point out that the above definitions easily generalize to the case where the algebras of the involved nets are not unital.

%\begin{example}
%\label{ex.B2.01a}
%Let $A$ be a $\rC^*$-algebra and $K$ denote the poset of proper, closed ideals of $A$
%ordered under inclusion. We have the tautological relaxed net of $\rC^*$-algebras $(\cA,\jmath)_K$,
%where $\cA_I := I$, $I \in K$, and $\jmath_{I'I}$ is the inclusion.
%%
%We also have a presheaf $(M \cA,r)^K$ having elements the multiplier algebras 
%$M\cA_I$, $I \in K$, with presheaf structure defined restricting the
%multiplier $F \in M\cA_{I'}$ to $I \subseteq I'$ 
%(in fact, it is easily verified that $Ft \in I$ for all $t \in \cA_I$ and $F \in M \cA_{I'}$). 
%It is then clear that the family of inclusions 
%%
%$\pi_I : \cA_I \to M\cA_I$, $I \in K$,
%%
%fulfils (\ref{eq.B2.01}).
%%
%\end{example}

\subsection{The presheaf defined by a $C_0(X)$-algebra}
\label{ex.B2.01}

In this section we consider the presheaf of $\rC^*$-algebras associated to a given $C_0(X)$-algebra:
this will allow us to define heteromorphisms from nets to $C_0(X)$-algebras.

%*********

\paragraph{Restriction morphisms.}
Given the $C_0(X)$-algebra $\sB$, for any open set $U \subset X$ we consider the closed ideal
$C_0(U) \sB$
generated by elements of the type $fv$, $f \in C_0(U)$, $v \in \sB$.
Let us consider the $\rC^*$-algebras
$S_Y \sB := \sB / C_0(X - \ovl Y) \sB$, $Y \in K$,
and the corresponding quotients
\begin{equation}
\label{Cb:2}
r_Y: \sB \to S_Y \sB \ \ , \ \ Y \in K \ .
\end{equation}
\begin{lemma}
\label{lem.Cb}
Let $t,s \in \sB$ and $Y \in K$. Then
$r_Y(t) = r_Y(s)$ if, and only if, 
$r^x(t)  =  r^x(s)$ for any  $x \in Y$.

\end{lemma}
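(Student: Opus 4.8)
The plan is to unwind both quotients into statements about ideals and then to identify the relevant ideals using the compatibility of the $C_0(X)$-structure with localization. Recall that $r_Y$ is the quotient by $C_0(X-\ovl Y)\sB$, while $r^x$ is the quotient by $C_x(X)\sB$; so $r_Y(t)=r_Y(s)$ iff $t-s\in C_0(X-\ovl Y)\sB$, and $r^x(t)=r^x(s)$ for all $x\in Y$ iff $t-s\in\bigcap_{x\in Y}C_x(X)\sB$. Thus the lemma is equivalent to the ideal identity
\[
C_0(X-\ovl Y)\,\sB \;=\; \bigcap_{x\in Y}C_x(X)\,\sB \ .
\]

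First I would prove the inclusion $\subseteq$, which is the easy direction: for $x\in Y$ we have $C_0(X-\ovl Y)\subset C_x(X)$ as ideals of $C_0(X)$ (a function vanishing off $\ovl Y$ certainly vanishes at $x\in Y$), hence $C_0(X-\ovl Y)\sB\subset C_x(X)\sB$, and intersecting over $x\in Y$ gives the claim. For the reverse inclusion I would invoke the standard structure theory of $C_0(X)$-algebras (as summarized in \S\ref{sec.lc}, following \cite{Nil96,Bla96}): for a closed subset $F\subset X$ one has $C_0(X-F)\sB=\bigcap_{x\in F}C_x(X)\sB$, i.e. an element $v\in\sB$ lies in $C_0(X-F)\sB$ if and only if $r^x(v)=0$ for every $x\in F$. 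Applying this with $F=\ovl Y$ gives $C_0(X-\ovl Y)\sB=\bigcap_{x\in\ovl Y}C_x(X)\sB$, and since $\bigcap_{x\in\ovl Y}\subseteq\bigcap_{x\in Y}$ trivially, the remaining point is to upgrade "$r^x(t-s)=0$ for all $x\in Y$" to "for all $x\in\ovl Y$". This follows from upper semicontinuity of the norm function $n_{t-s}(x)=\|r^x(t-s)\|$: the set $\{x:n_{t-s}(x)=0\}$ need not be closed in general, but $\{x:n_{t-s}(x)\le\varepsilon\}$ is closed for every $\varepsilon>0$, and since it contains $Y$ it contains $\ovl Y$; letting $\varepsilon\to 0$ shows $n_{t-s}$ vanishes on $\ovl Y$.

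Assembling: $r_Y(t)=r_Y(s)\iff t-s\in C_0(X-\ovl Y)\sB\iff r^x(t-s)=0\ \forall x\in\ovl Y\iff r^x(t-s)=0\ \forall x\in Y$, where the last equivalence uses the semicontinuity argument above in one direction and is trivial in the other. Rewriting $r^x(t-s)=0$ as $r^x(t)=r^x(s)$ finishes the proof.

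The main obstacle is the passage from $Y$ to $\ovl Y$: one must be careful that the naive zero set of the norm function is not closed, so the argument has to go through the closed sublevel sets $\{n_{t-s}\le\varepsilon\}$ rather than $\{n_{t-s}=0\}$ directly. The identity $C_0(X-F)\sB=\bigcap_{x\in F}C_x(X)\sB$ itself I would treat as known from the cited references on $C_0(X)$-algebras rather than reprove it; if a self-contained argument is wanted, it reduces to the fact that $C_0(X-F)=\bigcap_{x\in F}C_x(X)$ in $C_0(X)$ together with a partition-of-unity / approximate-unit argument inside the ideal $C_0(X-F)\sB$.
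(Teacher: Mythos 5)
Your overall route is the paper's: the forward implication via ideal membership, and the converse by showing $v:=t-s\in C_0(X-\ovl Y)\sB$, which the paper carries out by hand with exactly the Urysohn/approximate-unit argument you propose to black-box as the identity $C_0(X-F)\sB=\bigcap_{x\in F}C_x(X)\sB$ for closed $F$. The reduction to an ideal identity, the easy inclusion, and that citation are all fine. The gap sits precisely at the step you yourself single out as the main obstacle, namely the passage from $Y$ to $\ovl Y$: you claim that $\{x: n_{t-s}(x)\le\eps\}$ is closed because $n_{t-s}$ is upper semicontinuous. That is the wrong direction of semicontinuity. Upper semicontinuity of $n_v$ means that $\{n_v<\eps\}$ is open, equivalently that the \emph{super}level sets $\{n_v\ge\eps\}$ are closed; closedness of the sublevel sets $\{n_v\le\eps\}$ is the \emph{lower} semicontinuity property, which the norm functions of a general $C_0(X)$-algebra do not enjoy. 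So the sets you intersect need not contain $\ovl Y$, and the argument collapses.

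The obstacle is genuine and not just presentational. Take $X=\bR$ and $\sB=C_0(\bR)\oplus\bC$ with the faithful nondegenerate central action $f\cdot(g,c):=(fg,\,f(0)c)$; for $v=(0,1)$ the norm function $n_v$ is the characteristic function of $\{0\}$, which is upper semicontinuous and vanishes on $Y=(0,1)$ but not at $0\in\ovl Y$, and indeed $r^x(v)=0$ for all $x\in Y$ while $v\notin C_0(X-\ovl Y)\sB$. In fairness, the paper's own proof makes the same unjustified jump: it asserts that the open set $O_\eps=\{n_v<\eps\}$ contains $\ovl Y$ when the hypothesis only yields $O_\eps\supset Y$. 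So the ``if'' half of the Lemma as stated needs an extra ingredient --- either the assumption that $\sB$ is a continuous bundle, or a strengthening of the hypothesis to $r^x(t)=r^x(s)$ for all $x\in\ovl Y$ (the forward direction already delivers vanishing on $\ovl Y$, so the ``only if'' half is unaffected). Under either amendment your argument and the paper's coincide; as written, your semicontinuity step is false and should be flagged rather than asserted.
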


\begin{proof}
We have $r_Y(s) = r_Y(t)$ if and only if
$t-s = fv$
for some 
$f \in C_0(X - \ovl Y)$, $v \in \sB$,
and this implies
$r^x(t) - r^x(s) = f(x) r^x(v) = 0$, $\forall x \in \ovl Y$,
that is,
\[
r^x(t) = r^x(s) \ , \ \forall x \in Y \ .
\]
Conversely, let $t,s \in \sB$ fulfilling the above relations. 
To prove the Lemma it suffices to verify that $v := t-s$ belongs to $C_0(X- \ovl Y)\sB$.
To this end, we note that 
$\| v \| = \sup_{x \in X-Y}\| r^x(v) \|$
and that, by upper-semicontinuity of $n_v$, for any $\eps > 0$ 
the set $O_\eps:=\{ x\in X \,| \, \|r^x(v) \| < \eps\}\supset \ovl Y$ is open.  
By a standard application of the  Urysohn Lemma we get: a continuous function 
$f_\eps:X\to[0,1]$ and a compact set $W_\eps$,  with $O_\eps\supset W_\eps\supset \ovl Y$, 
such that $f_\eps =0$ on $\ovl Y$ and $f_\eps=1$ on $X-W_\eps$; an approximate unit  $\{g_\lambda\}$ of $C_0(X)$, where the index $\lambda$ varies  in the set of compact subsets of $X$, such that  $g_\lambda(x)=1$ for $x\in \lambda$. So, taking $\lambda$ such that $W_\eps\subseteq \lambda$ and $\| v - g_\lambda v\| < \eps$, we get 
\[
\| v - f_\eps g_\lambda v \| \leq 
\| v - g_\lambda v \| + \| g_\lambda v - f_\eps g_\lambda v \| =
\| v - g_\lambda v \| + \sup_{x \in W_\eps-\ovl Y} \| r^x(v) - f_\eps(x) r^x(v) \| < 2 \eps \ , 
\]
and this, since $f_\eps g_\lambda \in C_0(X-\ovl Y)$ for any $\lambda$ and $\eps$,  completes the proof.  
\end{proof}

%*********

\paragraph{Pullbacks of local sections.}
Local sections of $\sB$ can be smoothed by continuous  functions in such a way to define
elements of $\sB$ itself, as follows:
\begin{equation}
\label{Cb:1}
C_0(Y) \times S_Y \sB \to \sB
\ \ , \ \
f , r_Y(t) \mapsto f \tr r_Y(t) := ft 
\ .
\end{equation}
This operation is well-defined because $ft = ft'$ for any $t' \in \sB$ such that $r_Y(t) = r_Y(t')$.

\begin{corollary}
\label{cor.zzz}
Let $w,w' \in S_Y \sB$ such that $f \tr w = f \tr w'$ for all $f \in C_0(Y)$.
Then $w=w'$.
\end{corollary}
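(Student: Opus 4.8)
The statement to prove is Corollary \ref{cor.zzz}: if $w, w' \in S_Y\sB$ satisfy $f \tr w = f \tr w'$ for all $f \in C_0(Y)$, then $w = w'$. The plan is to reduce this to Lemma \ref{lem.Cb} by translating the smoothing operation $f \tr r_Y(t)$ into statements about the fibre quotients $r^x$.

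First I would lift $w, w'$ to elements $t, t' \in \sB$ with $r_Y(t) = w$ and $r_Y(t') = w'$, using surjectivity of $r_Y$ in \eqref{Cb:2}. Then the hypothesis reads $ft = ft'$ for all $f \in C_0(Y)$, i.e. $f(t - t') = 0$ in $\sB$. Applying the fibre map $r^x$ for $x \in Y$ gives $f(x)\, r^x(t - t') = 0$ for all $f \in C_0(Y)$; choosing $f$ with $f(x) \neq 0$ (possible since $Y$ is open and nonempty, so $C_0(Y)$ separates points of $Y$ from the complement — in particular for each $x \in Y$ there is $f \in C_0(Y)$ with $f(x) = 1$), we conclude $r^x(t) = r^x(t')$ for every $x \in Y$. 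By Lemma \ref{lem.Cb} this is exactly the condition equivalent to $r_Y(t) = r_Y(t')$, i.e. $w = w'$.

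The only point requiring a little care is the existence, for each $x \in Y$, of a function $f \in C_0(Y)$ nonvanishing at $x$: this follows from local compactness of $X$ and openness of $Y$ via Urysohn's lemma (take a compact neighbourhood of $x$ inside $Y$ and bump it up), exactly the kind of argument already used in the proof of Lemma \ref{lem.Cb}. I do not expect any genuine obstacle here; the corollary is essentially a restatement of Lemma \ref{lem.Cb} once one unwinds the definition \eqref{Cb:1} of $\tr$ through the fibres, and the proof is short.
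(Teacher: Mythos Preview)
Your proof is correct and follows essentially the same route as the paper's: lift to $\sB$, evaluate at each fibre $x\in Y$ using an $f\in C_0(Y)$ with $f(x)=1$, and invoke Lemma~\ref{lem.Cb}. The paper phrases the computation directly in terms of $r^x$ applied to $w,w'\in S_Y\sB$ (via $r^x_Y$) rather than first lifting to representatives $t,t'\in\sB$, but this is only a cosmetic difference.
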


\begin{proof}
By the previous Lemma it suffices to check that $r^x(w) = r^x(w')$ for all $x \in Y$.
To this end, given $x \in Y$ we take $f \in C_0(Y)$ such that $f(x) = 1$, so
$r^x(f \tr w) = f(x)r^x(w) = r^x(w)$,
and, in the same way, $r^x(f \tr w') = r^x(w')$.
Since by hypothesis $r^x(f \tr w) = r^x(f \tr w')$ we find $r^x(w) = r^x(w')$, 
so $w=w'$ as desired.
\end{proof}

%*********

\paragraph{The presheaf structure.}
Since the inclusion of ideals $C_0(X - \ovl{Y'})\sB \subseteq C_0(X - \ovl Y)\sB$ holds for any  $Y\subseteq Y'$,   
we have a $^*$-morphism $r_{YY'} : S_{Y'} \sB \to S_Y \sB$ defined by 
\begin{equation}
\label{Cb:3} 
r_{YY'}\circ r_{Y'} := r_Y  \ , \ \   Y\subseteq Y' \ , 
\end{equation}
which obviously fulfils the presheaf relations
$r_{YY''} = r_{YY'} \circ r_{Y'Y''}$, for any  $Y \subseteq Y' \subseteq Y''$,
and yield the presheaf \smash{$(S \sB,r)^K$}. 
Furthermore, for the same reason as in the previous definition,  for any $x\in Y$ we have a $^*$-morphism 
$r^x_Y : S_Y \sB \to \sB_x$  defined as 
\begin{equation}
\label{Cb:2a}
r^x_Y\circ r_Y := r^x  \ \ ,  \ \   x\in Y \ . 
\end{equation}
Concerning the functoriality of the above constructions, any $C_0(X)$-morphism 
$\phi : \sB \to \sB'$  
induces in an obvious way the morphism
\begin{equation}
\label{Cb:6}
\phi^s : (S\sB, r)^K\to (S\sB', r')^K 
\ \ , \ \ 
\phi^{s,Y}\circ r_Y := r'_Y\circ \phi  \ \ , \ \ Y\in K \  .    
\end{equation}
This leads to a functor 
from the category of $C_0(X)$-algebras to the category of presheaves of $\rC^*$-algebras.
About the functoriality of (\ref{Cb:1}) we note that
\begin{equation}
\label{Cb:1a}
\phi(f \tr r_Y(t)) \ \stackrel{ (\ref{Cb:1}) }{=} \ 
\phi(ft) \ = \
f \phi(t) \ \stackrel{ (\ref{Cb:1}) }{=} \
f \tr r'_Y(\phi(t)) \ \stackrel{ (\ref{Cb:6}) }{=} \
f \tr \phi^{s,Y}(r_Y(t)) \in \sB' \ .
\end{equation}

%*********

\paragraph{Morphisms from nets to $C_0(X)$-algebras.}
We conclude by giving the notion of \emph{heteromorphism from a net $(\cA,\jmath)_K$ to a $C_0(X)$-algebra $\sB$},
that is, a heteromorphism
\smash{$\phi : (\cA,\jmath)_K \nrightarrow(S \sB,r)^K$}.
The geometric content of $\phi$ is that any $t \in \cA_Y$, $Y \in K$, defines the local section 
$\phi_Y(t) \in S_Y \sB$
that can be extended, using the net structure of $(\cA,\jmath)_K$, to the local section 
$\phi_{Y'} \circ \jmath_{Y'Y}(t)$.  
To be concise, in the sequel $\phi$ will be denoted by
\begin{equation}
\label{eq.comorph}
\phi : (\cA,\jmath)_K \nrightarrow  \sB \ .
\end{equation}
Compositions of (hetero)morphisms between nets and $C_0(X)$-algebras shall be
denoted,  coherently with the above notation, as follows:
let $(\cA_0,\jmath_0)_K$, $(\cA,\jmath)_K$ be nets of $\rC^*$-algebras, 
let $\sB$, $\sB'$ be  $C_0(X)$-algebras,  and
\[
\phi : (\cA,\jmath)_K \nrightarrow  \sB
\ \ , \ \
\eta : (\cA_0,\jmath_0)_K \to (\cA,\jmath)_K
\ \ , \ \
\psi : \sB \to \sB'
\]
(hetero)morphisms. Then we define 
\begin{equation}
\phi \circ \eta : (\cA_0,\jmath_0)_K \nrightarrow  \sB \ \ , \ \  (\phi\circ\eta)_Y :=   \phi_Y\circ\eta_Y \ \ , \ \ Y\in K \ , 
\end{equation}
and  using (\ref{Cb:6})
\begin{equation}
\label{eq.comp}
\psi^s \circ \phi : (\cA,\jmath)_K \nrightarrow  \sB' \ \ , \ \  (\psi^s \circ \phi)_Y := \psi^{s,Y} \circ \phi_Y \ \ , \ \ Y\in K 
\ .
\end{equation}
%\begin{equation}
%\label{eq.comp}
%\phi \circ \eta  := \phi^s \circ \eta : (\cA_0,\jmath_0)_K \nrightarrow  \sB
%\ \ , \ \
%\psi \circ \phi := \psi^s \circ \phi : (\cA,\jmath)_K \nrightarrow  \sB'
%\ .
%\end{equation}

\section{From nets of $\rC^*$-algebras to $C_0(X)$-algebras}
\label{sec_LUA}

In this section we prove our main result: we associate  a $C_0(X)$-algebra
to a net of $\rC^*$-algebras and prove  that this assignment is a functor. 
The assignment is not trivial when the net is non-degenerate and, in particular, is \emph{faithful} in a suitable sense when the net is injective. 
Furthermore, we shall see that sections of the net induce multipliers of the
corresponding $C_0(X)$-algebra. We conclude by drawing  some consequences 
on the representation theory of a net. \\
\indent It is worth recalling that we are assuming 
that $X$ is a locally compact path-connected Hausdorff space
and that $K$ is a base of $X$, ordered under inclusion, of relatively compact open subsets of $X$
(see the begin of Section \ref{B}).

\subsection{The $C_0(X)$-algebra of a net}
\label{DA}
In the following lines we construct the $\rC_0(X)$-algebra associated to a net of $\rC^*$-algebras and show 
that this assignment satisfies a universal property  yielding  a functorial structure.\smallskip

Let  $(\cA,\jmath)_K$ be a net of $\rC^*$-algebras. For any  $x\in X$, consider  the set
$\omega_x := \{ Y \in K : x \in Y \}$
and the net $(\cA,\jmath)_{\omega_x}$ obtained by restricting $(\cA,\jmath)_K$ to $\omega_x$. 
%
%Since $\omega_x$ is downward directed  
%there is a  canonical morphism from the net $(\cA,\jmath)_{\omega_x}$ to the universal $\rC^*$-algebra.
%
Denoting the universal $\rC^*$-algebra of $(\cA,\jmath)_{\omega_x}$ by $\cA_x$ (see \S \ref{sec.nets}), 
the canonical morphism  
$\e^x  :(\cA,\jmath)_{\omega_x}\to \cA_x$ 
satisfies the relations
\begin{equation}
\label{Da:00}
\e^x_{Y'}\circ \jmath_{Y'Y}= \e^x_Y \  , \qquad  Y\subseteq Y' \ , Y,Y'\in\omega_x \ .
\end{equation}
The basic idea is that the $C_0(X)$-algebra associated with the net $(\cA,\jmath)_K$ 
is generated by a suitable set of vector fields of  the product $\rC^*$-algebra $\cA^X:=\prod_{x\in X} \cA_{x}$ 
(i.e., the $\rC^*$-algebra of vector fields $t:x\mapsto t_x\in \cA_x$ endowed with the $*$-algebraic structure 
defined componentwise and the sup-norm $\|t\|:= \sup_x\|t_x\|_x$). \smallskip 

To begin with, note that for any $Y\in K$ there is a morphism $\widehat{\e}_Y:\cA_Y\to \cA^X$ 
defined by 
\begin{equation}
\label{Da:1}
\widehat{\e}_{Y}(t)_x 
:= \left\{
\begin{array}{ll}
 \e^x_Y(t)  \ , &  x\in Y \ , \\
0  \ , & otherwise  \ ,
\end{array}
\right.
\end{equation} 
for any $t\in\cA_Y$.
The $^*$-algebra generated by vector fields $\widehat{\e}_Y(t)$, as $Y$ and $t$ vary in $K$ and $\cA_Y$ 
respectively, is not closed under pointwise multiplication of continuous functions. So we smooth these fields as follows 
\[
( f\,\widehat{\e}_Y(t))_x:= f(x)\, \widehat{\e}_Y(t)_x \ ,\qquad Y\in K \ , \ f\in C_0(Y)  \ , \ t\in\cA_Y \ , 
\]
and define $\cA_*$ to be the $^*$-algebra generated by the fields $f\,\widehat{\e}_Y(t)$ 
as $Y$, $f$ and $t$ vary in $K$, $C_0(Y)$ and $\cA_Y$ respectively. 
A generic element $\hat t$ has  the form 
\[
\hat t = \sum_{s\in S} \prod_{i\in I_s} f_{i} \,\widehat{\e}_{Y_{i}}(t_{i}) \ , \qquad    \ Y_{i}\in K \ ,  \ f_{i}\in C_0(Y_{i}) \ , \ t_{i}\in \cA_{Y_{i}} \ , 
\]
where $S$ and $I_s$, for $s\in S$, are sets of indices having a finite cardinality. Note in particular that any $\hat t$ has a  compact support since it is contained 
in $\cup_{s\in S}\cup_{i\in I_s} Y_i$ which is  relatively compact. This implies that 
$\cA_*$ is a non-degenerate $C_0(X)$-module under pointwise multiplication, in fact for any approximate unit $\{f_\lambda\}$ of $C_c(X)$ the identity  
$f_\lambda\hat t =\hat t$ holds eventually.  Finally, we define the $\rC^*$-algebra
\begin{equation}
\label{Da:2}
\sA \ := \ \cA^{-\|\, \|}_* \subset \cA^X \ ;
\end{equation}
in words, $\sA$ is the closure of $\cA_*$ under the sup-norm in $\cA^X$. \\
\indent We shall see soon that $\sA$ is indeed a $C_0(X)$-algebra and yields an interpretation of $(\cA,\jmath)_K$ 
as a net whose elements are local sections of $\sA$ that can be extended by means of $\jmath$. 
Before showing this, we need a preliminary observation.
\begin{remark}
We point out that the collection $\{\widehat {\e}_Y \ , \ Y\in K\}$ is not a morphism from the net $(\cA,\jmath)_K$ to the $\rC^*$-algebra $\cA_X$. In fact the relation 
$\widehat{\e}_Y\circ \jmath_{YY'} =\widehat{\e}_{Y'}$ does not hold in general but only in restriction to $Y'$ i.e.\ 
\[
\widehat{\e}_{Y'}(t)_x  = \widehat{\e}_{Y}(\jmath_{YY'}(t))_x \ ,  \qquad x\in Y'\subseteq Y \ , 
\]
as can be easily seen by  (\ref{Da:00}).  This and  the relations  (\ref{Da:00})  imply   
\begin{equation}
\label{Da:1a}
f \,\widehat{\e}_{Y}(t) \ = \ f \,\widehat{\e}_{Y'} (\jmath_{Y'Y}(t)) \ , \qquad  Y\subseteq Y', \ f\in C_0(Y), \ t\in\cA_Y \ .   
\end{equation} 
\end{remark}
\begin{theorem}
\label{thm.b2}
Let $(\cA,\jmath)_K$ a net of $\rC^*$-algebras. Then $\sA$ is a $C_0(X)$-algebra 
endowed with a canonical heteromorphism
\[
 \tau : (\cA,\jmath)_K \nrightarrow  \sA \ .
\]
\end{theorem}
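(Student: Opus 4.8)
The plan is to establish the two assertions of the theorem separately: first that $\sA$ carries a $C_0(X)$-algebra structure, and then that the maps $\tau_Y$ defined below assemble into a heteromorphism $(\cA,\jmath)_K \nrightarrow \sA$. For the $C_0(X)$-structure, I would exhibit the central morphism $C_0(X) \to M\sA$. The obvious candidate is the one sending $f \in C_0(X)$ to the multiplier acting by pointwise multiplication on vector fields in $\cA^X = \prod_x \cA_x$, i.e.\ $(f \cdot t)_x := f(x) t_x$. One must check that this multiplier preserves $\sA$: by construction $\cA_*$ is stable under multiplication by $C_0(Y) \subseteq C_0(X)$ and, more generally, since each generator $f\,\widehat{\e}_Y(t)$ is supported in the relatively compact set $\ovl Y$, multiplication by an arbitrary $g \in C_0(X)$ sends $f\,\widehat{\e}_Y(t)$ to $(gf)\,\widehat{\e}_Y(t)$ with $gf \in C_0(Y)$, hence keeps us inside $\cA_*$; closing under the sup-norm, $\sA$ is invariant. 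This morphism lands in the centre of $M\sA$ because pointwise multiplication by scalars commutes with the componentwise $*$-algebra operations on $\cA^X$, and it is non-degenerate precisely because of the remark already made in the text: for an approximate unit $\{f_\lambda\}$ of $C_c(X)$ one has $f_\lambda \hat t = \hat t$ eventually for each $\hat t \in \cA_*$, and this extends by density to all of $\sA$. That settles the first claim.

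For the heteromorphism, I would define $\tau_Y : \cA_Y \to S_Y\sA$ by $\tau_Y(t) := r_Y(\widehat{\e}_Y(t))$, where $r_Y : \sA \to S_Y\sA = \sA/C_0(X-\ovl Y)\sA$ is the quotient from \eqref{Cb:2} — note that $\widehat{\e}_Y(t) \in \sA$ (it is a limit of $f_\lambda\,\widehat{\e}_Y(t) \in \cA_*$), so this makes sense. Each $\tau_Y$ is a $*$-morphism since $r_Y$ is and $\widehat{\e}_Y$ is a $*$-morphism by \eqref{Da:1}. The content is to verify the heteromorphism relation \eqref{eq.B2.01}, namely $r_{YY'} \circ \tau_{Y'} \circ \jmath_{Y'Y} = \tau_Y$ for $Y \subseteq Y'$. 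Unwinding definitions, the left-hand side is $r_{YY'}(r_{Y'}(\widehat{\e}_{Y'}(\jmath_{Y'Y}(t)))) = r_Y(\widehat{\e}_{Y'}(\jmath_{Y'Y}(t)))$, using \eqref{Cb:3}; the right-hand side is $r_Y(\widehat{\e}_Y(t))$. So I need $r_Y(\widehat{\e}_{Y'}(\jmath_{Y'Y}(t))) = r_Y(\widehat{\e}_Y(t))$, and by Lemma \ref{lem.Cb} this is equivalent to $r^x(\widehat{\e}_{Y'}(\jmath_{Y'Y}(t))) = r^x(\widehat{\e}_Y(t))$ for all $x \in Y$. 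Here $r^x : \sA \to \sA_x$ is the evaluation-at-$x$ quotient, which on vector fields $\sA \subseteq \cA^X$ should coincide with (or factor through) the projection $t \mapsto t_x \in \cA_x$; granting that identification, the required equality is exactly the relation displayed in the Remark preceding the theorem, $\widehat{\e}_{Y'}(t')_x = \widehat{\e}_Y(\jmath_{YY'}(t'))_x$ for $x \in Y' \subseteq Y$ — read with the roles of $Y,Y'$ suited to $Y \subseteq Y'$ and $x \in Y$, it gives $\widehat{\e}_Y(t)_x = \widehat{\e}_{Y'}(\jmath_{Y'Y}(t))_x$, which is a direct consequence of \eqref{Da:00}. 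Chasing through the approximate-unit argument then upgrades the pointwise statement to the statement about the smoothed generators, i.e.\ relation \eqref{Da:1a} is what does the work.

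The step I expect to be the genuine obstacle is the identification of the fibre $\sA_x = \sA/\{C_x(X)\sA\}$ with the universal $\rC^*$-algebra $\cA_x$ of $(\cA,\jmath)_{\omega_x}$, equivalently, pinning down the map $r^x$ on $\sA$. The evaluation map $\cA^X \to \cA_x$, $t \mapsto t_x$, restricts to a surjection $\sA \to \cA_x$ (surjectivity because the $\widehat{\e}_Y(t)_x = \e^x_Y(t)$ for $Y \in \omega_x$ generate $\cA_x$ by the universal property, and scaling by $f$ with $f(x)=1$ recovers them); the subtle point is that its kernel is exactly $C_x(X)\sA$, so that $r^x$ is literally evaluation. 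The inclusion $C_x(X)\sA \subseteq \ker(\text{ev}_x)$ is immediate; the reverse inclusion requires an upper-semicontinuity / partition-of-unity argument in the spirit of the proof of Lemma \ref{lem.Cb} — given a vector field vanishing at $x$, one must approximate it by $f \cdot (\text{field})$ with $f$ vanishing near $x$, using that the norm function $y \mapsto \|t_y\|_y$ is upper semicontinuous on $X$ (which follows from the $C_0(X)$-algebra theory of \cite{Nil96,Bla96} once the central action is in place). Once this fibrewise description is secured, everything else is bookkeeping, and in fact the theorem as stated only requires the $C_0(X)$-structure plus the heteromorphism relation, the latter of which, as sketched, reduces cleanly to \eqref{Da:1a}; so in a pinch one can prove the theorem without the full fibre identification, deferring the sharper statement $\sA_x \cong \cA_x$ to a subsequent result.
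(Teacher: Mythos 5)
Your treatment of the $C_0(X)$-structure is fine and coincides with the paper's. The gap is in the definition of $\tau_Y$: you set $\tau_Y(t) := r_Y(\widehat{\e}_Y(t))$ and justify this by claiming $\widehat{\e}_Y(t) \in \sA$ as a limit of $f_\lambda\,\widehat{\e}_Y(t) \in \cA_*$. That claim is false in general. For $f_\lambda\,\widehat{\e}_Y(t)$ to lie in $\cA_*$ you need $f_\lambda \in C_0(Y)$, and then $f_\lambda$ must vanish at $\partial Y$, so
$\Vert \widehat{\e}_Y(t) - f_\lambda\,\widehat{\e}_Y(t)\Vert = \sup_{x\in Y}|1-f_\lambda(x)|\,\Vert\e^x_Y(t)\Vert$
stays bounded below whenever $\Vert\e^x_Y(t)\Vert$ does not tend to $0$ at $\partial Y$ — which is the generic situation. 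Concretely, for a constant net $A$ on $X=\bR$ one has $\sA \simeq C_0(\bR)\otimes A$ (every element of $\cA_*$ is a continuous field because each generator $f\,\widehat{\e}_Y(t)$ extends $f$ by zero to a continuous function on $X$, and sup-norm limits preserve continuity), whereas $\widehat{\e}_Y(t)$ is the indicator function of $Y$ times $t$, a discontinuous field. So $\widehat{\e}_Y(t)\notin\sA$ and the expression $r_Y(\widehat{\e}_Y(t))$ does not parse. This is precisely the obstruction the paper's proof is designed to avoid: there one chooses $U\in K$ with $\ovl Y\subset U$ and a plateau $g\in C_0(U)$ with $g\equiv 1$ on $Y$, and sets $\tau_Y(t):=r_Y\bigl(g\,\widehat{\e}_U(\jmath_{UY}(t))\bigr)$; the element $g\,\widehat{\e}_U(\jmath_{UY}(t))$ is an honest generator of $\cA_*$, and the price to pay is the (routine but necessary) verification that the definition is independent of the choices of $g$ and $U$. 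Your heteromorphism computation can then be carried out essentially as you sketch it, with (\ref{Da:1a}) doing the work.

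A secondary point: your verification of (\ref{eq.B2.01}) goes through Lemma \ref{lem.Cb}, which reduces $r_Y(a)=r_Y(b)$ to $r^x(a)=r^x(b)$ for $x\in Y$, and you then replace $r^x$ by evaluation $a\mapsto a_x$. Knowing $C_x(X)\sA\subseteq\ker(\mathrm{ev}_x)$ only gives that $\mathrm{ev}_x$ factors through $r^x$, so equality of evaluations does not yet imply equality of the $r^x$'s; you would genuinely need the injectivity half of the fibre identification $\sA_x\simeq\cA_x$ (the paper's Prop.\ \ref{prop.uni}, proved \emph{after} the theorem), which you correctly flag as the hard step but then treat as dispensable. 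The paper sidesteps this entirely: with the plateau definition, the compatibility $r_{YY'}\circ\tau_{Y'}\circ\jmath_{Y'Y}=\tau_Y$ follows from (\ref{Cb:3}) and the net relations alone, because the same $U$ and $g$ can be used to define both $\tau_Y$ and $\tau_{Y'}\circ\jmath_{Y'Y}$, with no appeal to the fibrewise description of $\sA$.
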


\begin{proof}
$\sA$ is a $\rC^*$-algebra, so we verify that it is a $C_0(X)$-algebra. Note that,  
for any $f \in C_0(X)$, $\hat t \in \cA_*$ it turns out
$\| f \hat t \| \ = \ 
\sup_{x\in X} \| f(x) \, \hat t_x \|_x \ \leq \ 
 \| f \|_\infty \| \hat t \|
$,   
so $C_0(X)$ acts by bounded linear operators on $\sA$ and we can extend by continuity
the $C_0(X)$-action on $\sA$. 
By density, it is easily seen that $C_0(X)$ acts on $\sA$ by central multipliers.
To prove that this action is non-degenerate we take $T \in \sA$, pick $\eps > 0$
and $\wa t_\eps \in \cA_*$ such that $\| T - \wa t_\eps \| < \eps$.
Given an approximate unit $\{ f_\lambda \} \subset C_c(X)$ there is, as observed before, $\lambda_\eps$
such that $\wa t_\eps = f_\lambda \wa t_\eps$ for all $\lambda > \lambda_\eps$, so
\[
\| T - f_\lambda T\| \leq
\| T - \wa t_\eps \| + \| f_\lambda(T - \wa t_\eps) \| \leq
2 \eps \ . 
\]
This proves that the $C_0(X)$-action is non-degenerate, so $\sA$ is a $C_0(X)$-algebra. \smallskip

We now define the canonical morphism $\tau$. Given $Y\in K$, we consider $U\in K$ such that $\overline{Y}\subset U$ and  
a \emph{plateau} $g$ of $C_0(U)$ on $Y$, i.e. a function $g$ of $C_0(U)$ such that $g=1$ on $Y$. Define 
\[
\tau_Y(t) \ := \ r_{Y}(g \ \widehat{\e}_{U}(\jmath_{UY}(t)) \  ) \ , \qquad   t \in \cA_Y \ , 
\]
where we used the quotient
$r_Y : \sA \to \sA / \{ C_0(X-\overline{Y}) \sA \}$.  
We prove that this definition is well posed. 
We first note that we have independence of the choice of the plateau on $Y$, because if 
$h \in C_0(U)$ is a plateau on $Y$, then   
\[
g \,\widehat{\e}_{U}(\jmath_{UY}(t))- h \,\widehat{\e}_{U}(\jmath_{UY}(t)) = 
(g-h)\widehat{\e}_{U}(\jmath_{UY}(t))\in C_0(X-\overline{Y})\sA = \ker r_Y \ .
\]
To prove that $\tau_Y(t)$ is independent of the choice of $U$ we make the following remark.
Consider $V \in K$ such that $\overline{Y}\subset V\subset U$ and observe that if $\tilde g\in C_0(V)$ then  
\begin{equation}
\label{eq.b2}
\tilde g\,\widehat{\e}_{U}(\jmath_{UY}(t)) \ = \ 
\tilde g\,\widehat{\e}_{U}(\jmath_{UV} \circ \jmath_{VY}(t)) \ \stackrel{(\ref{Da:1a})}{=} \ 
\tilde g \,\widehat{\e}_{V}(\jmath_{VY}(t)) \ .
\end{equation}

We now can prove the independence of $U$. 
Given $U' \supseteq Y$ and $g' \in C_0(U')$ such that $g' = 1$ on $Y$, we note that 
we can always find $V$ such that $\overline{Y}\subset V$, $\overline{V} \subset U \cap U'$,
and $\tilde g \in C_0(V)$ such that $\tilde g = 1$ on $Y$. 
We have
\[
\begin{array}{ll}
g \,\widehat{\e}_{U}(\jmath_{UY}(t)) - g' \,\widehat{\e}_{U'}(\jmath_{U'Y}(t)) & = \
\tilde g \,\widehat{\e}_{U}(\jmath_{UY}(t)) - \tilde g \,\widehat{\e}_{U'}(\jmath_{U'Y}(t)) + F  \ \stackrel{(\ref{eq.b2})}{=} \\ & = \
\tilde g \,\widehat{\e}_{V}(\jmath_{VY}(t)) - \tilde g \,\widehat{\e}_{V}(\jmath_{VY}(t)) + F \ = \ F \ ,
\end{array}
\]
where 
$F := (g-\tilde g) \widehat{\e}_{U}(\jmath_{UY}(t)) + (g'-\tilde g) \widehat{\e}_{U'}(\jmath_{U'Y}(t))$
is in $\ker r_Y$ by plateau independence.
This proves that the definition of $\tau_Y : \cA_Y \to S_Y \sA$ is well posed and clearly it is a linear map.
To prove that 
$\tau_Y$ is a $^*$-morphism we note that for any $t,s\in\cA_Y$ we have  
\[
\tau_Y(t^*s)= r_{Y}(g \ \widehat{\e}_{U}(\jmath_{UY}(t^*s)) = 
\ r_{Y}\big(\sqrt{g} \ \widehat{\e}_{U}(\jmath_{UY}(t))^* \sqrt{g} \,\widehat{\e}_{U}(\jmath_{UY}(s))\big)  =
\tau_Y(t)^* \tau_Y(s) \ , 
\]
because of the independence of the choice of the plateau, since $\sqrt{g}$ is a plateau of $C_0(U)$ on $Y$. 
Finally, if $Y \subset Y'$  and $t\in\cA_{Y}$ then 
\[
r_{YY'}\circ \tau_{Y'} (\jmath_{Y'Y}(t)) \ = \   
r_{YY'}\circ r_{Y'}(g \,\widehat{\e}_{U}(\jmath_{UY'}\circ \jmath_{Y'Y}(t)) \ \stackrel{(\ref{Cb:3})}{=} \
r_{Y}(g \,\widehat{\e}_{U}(\jmath_{UY}(t)) \ = \
\tau_{Y}(t) \ ,
\]
because $U$ and $g$ satisfies all the properties for the definition of $\tau_{Y}$. 
This proves that $\tau$ is an heteromorphism as desired.
\end{proof}
\begin{remark}
By the above construction it seems natural to identify the fibres 
$\sA_x := \sA / \{ C_x(X)\sA \}$, $x \in X$,
with the $\rC^*$-algebras $\cA_{x}$. Indeed, the fact that $\cA_{x}$ is isomorphic to $\sA_x$
shall be proved in the sequel, see Prop.\ref{prop.uni}.
\end{remark} 
To describe explicitly the canonical heteromorphism we note that, for all $Y \in K$,
$f \in C_0(Y)$ and $t \in \cA_Y$,
\begin{equation}
\label{Da:10}
f \tr \tau_Y(t) \ = \ 
fg \, \widehat{\e}_U(\jmath_{UY}(t)) \ \stackrel{ g|_Y = 1 }{=} \
f \widehat{\e}_U(\jmath_{UY}(t)) \ = \
f \widehat{\e}_Y(t) \ .
\end{equation}
This also shows that $\tau$ has \emph{dense image}, in the sense that the set
\[
\{ f \tr \tau_Y(t)  \ : \  Y \in K \ , \ t \in \cA_Y \ , \ f \in C_0(Y)  \}
\]
generates $\sA$ as a $\rC^*$-algebra.
We can now show a universal property of $\sA$.

\begin{theorem}
\label{thm.uni}
Let $(\cA,\jmath)_K$ be a net of $\rC^*$-algebras. Then $\sA$ fulfils 
the following universal property:
for any heteromorphism $\phi : (\cA,\jmath)_K \nrightarrow \sB$ into a $C_0(X)$-algebra $\sB$,
there is a unique $C_0(X)$-morphism $\bar \phi : \sA \to \sB$ such that 
$\phi = {\bar \phi}^s \circ \tau$.
\end{theorem}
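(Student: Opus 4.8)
The plan is to construct $\bar\phi$ directly on the dense $*$-subalgebra $\cA_* \subset \sA$ spanned by the smoothed fields $f\,\widehat{\e}_Y(t)$, then extend by continuity. First I would observe, using equation (\ref{Da:10}), that $f\,\widehat{\e}_Y(t) = f \tr \tau_Y(t)$, so that every generator of $\cA_*$ is of the form $f \tr \tau_Y(t)$ with $Y \in K$, $f \in C_0(Y)$, $t \in \cA_Y$. The only sensible definition is then forced by the requirement $\phi = \bar\phi^s \circ \tau$ together with the functoriality relation (\ref{Cb:1a}): namely
\[
\bar\phi\big( f\,\widehat{\e}_Y(t) \big) \ := \ f \tr \phi_Y(t) \ \in \ \sB \ .
\]
I would extend this to all of $\cA_*$ by declaring $\bar\phi$ multiplicative and linear on products $\prod_i f_i\,\widehat{\e}_{Y_i}(t_i)$, sending such a product to $\prod_i \big( f_i \tr \phi_{Y_i}(t_i) \big)$ in $\sB$.

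The main obstacle is \emph{well-definedness}: a given element of $\cA_*$ has many representations as a sum of products of generators, and I must check that the proposed value of $\bar\phi$ does not depend on the representation. The key tool here is Corollary \ref{cor.zzz} (and Lemma \ref{lem.Cb}) applied fibrewise: two elements of $\sA$ (or of $S_Y\sB$) agree iff their images under all $r^x$ agree, and the pullbacks $f \tr (-)$ separate points. Concretely, I would first reduce to checking the identity $f\,\widehat{\e}_Y(t) = f\,\widehat{\e}_{Y'}(\jmath_{Y'Y}(t))$ from (\ref{Da:1a}) is respected — i.e. that $f \tr \phi_Y(t) = f \tr \phi_{Y'}(\jmath_{Y'Y}(t))$ in $\sB$ — which follows from the heteromorphism relation (\ref{eq.B2.01}) for $\phi$ combined with (\ref{Cb:1a}) and the definition of $f \tr$. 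More carefully: for $x \in Y$, the value $r^x\big(f \tr \phi_Y(t)\big) = f(x)\, r^x_Y(\phi_Y(t))$ depends only on the germ of the net at $x$, i.e. on the image of $t$ in the universal $\rC^*$-algebra $\cA_x$ of $(\cA,\jmath)_{\omega_x}$, by (\ref{eq.B2.01}) and the universal property of $\cA_x$; since $\widehat{\e}_Y(t)_x = \e^x_Y(t)$ depends only on that same germ, any relation holding among the $\widehat{\e}_Y(t)$ in $\cA_x \subset \cA^X$ must be matched by the corresponding relation among the $r^x_Y(\phi_Y(t))$, and separation (Corollary \ref{cor.zzz}) then promotes this to an honest relation in $\sB$. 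This is the heart of the argument.

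Once well-definedness is established, the remaining points are routine. Boundedness: $\|\bar\phi(\hat t)\| \le \|\hat t\|$ follows because on each fibre $\bar\phi$ factors through a $*$-homomorphism into $\sB_x$, so the sup-norm is non-increasing; hence $\bar\phi$ extends uniquely to a $*$-homomorphism $\sA \to \sB$. It is a $C_0(X)$-morphism because $\bar\phi(f'\,\hat t) = f'\,\bar\phi(\hat t)$ holds on generators by construction of $f \tr$ and extends by density. The intertwining relation $\phi = \bar\phi^s \circ \tau$ is checked by evaluating both sides against arbitrary $f \in C_0(Y)$ via $f \tr (-)$: one side gives $f \tr \phi_Y(t)$, the other gives $f \tr \phi^{s,Y}\big(\ldots\big)$, which by (\ref{Cb:1a}) and (\ref{Da:10}) equals $\bar\phi(f\,\widehat{\e}_Y(t)) = f \tr \phi_Y(t)$; Corollary \ref{cor.zzz} then yields equality in $S_Y\sB$. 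Finally, uniqueness of $\bar\phi$ is immediate since $\tau$ has dense image: any two $C_0(X)$-morphisms agreeing after composition with $\tau$ agree on the generators $f \tr \tau_Y(t) = f\,\widehat{\e}_Y(t)$ and hence everywhere.
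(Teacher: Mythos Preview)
Your proposal is correct and rests on the same key idea as the paper: the universal property of the fibre algebras $\cA_x$ produces, for each $x \in X$, a $*$-morphism $\phi^x : \cA_x \to \sB_x$ lifting the family $\{r^x_Y \circ \phi_Y\}_{Y \in \omega_x}$, and this is what makes everything work fibrewise. The organizational difference is that you define $\bar\phi$ on formal expressions in the generators and then invoke the fibrewise maps $\phi^x$ to check well-definedness, whereas the paper defines $\phi_* : \cA_* \to \prod_x \sB_x$ directly by $\phi_*(t)_x := \phi^x(t_x)$ (so well-definedness is automatic, since $\cA_* \subset \cA^X$ consists of actual vector fields, not formal sums), and then checks that the image lands in $r(\sB)$ by computing $r^x(f \tr \phi_Y(t)) = \phi^x((f\,\widehat\e_Y(t))_x)$ on generators. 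The paper's order of operations is slightly cleaner because it trades your well-definedness obstacle for the easier task of showing $\phi_*(f\,\widehat\e_Y(t)) \in r(\sB)$; but the content is identical, and your treatment of the intertwining relation $\phi = \bar\phi^s \circ \tau$ via Corollary~\ref{cor.zzz} and of uniqueness via density of $\tau$ matches the paper exactly.
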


\begin{proof}
By hypothesis, for any $Y \in K$ there is a morphism 
$\phi_Y:\cA_Y\to S_Y \sB$ 
such that 
$\phi_{Y} = r_{YY'}\circ \phi_{Y'}\circ \jmath_{Y'Y}$ for any inclusion  $Y\subseteq Y'$,
where $r_{YY'}$ is defined in (\ref{Cb:3}).
Let now $x\in X$; consider the morphism $\phi^x_Y : \cA_Y \to \sB_x$ defined by
\[
\phi^x_Y := r^x_Y \circ \phi_Y \ , \qquad  Y \in\omega_x \ , 
\]
where  $r^{x}_{Y}$ is the morphism defined by (\ref{Cb:2a}).  The defining relations (\ref{Cb:3}) and (\ref{Cb:2a}) imply  that $r^x_{Y'} = r^{x}_{Y} \circ r_{YY'}$ for any $Y,Y'\in\omega_x$, $Y \subseteq Y'$, 
so
\[
\phi^x_{Y'} \circ \jmath_{Y'Y} \ = \
r^x_{Y'} \circ \phi_{Y'} \circ \jmath_{Y'Y} \ = \
r^{x}_{Y} \circ r_{YY'} \circ \phi_{Y'} \circ \jmath_{Y'Y} \ = \
r^{x}_{Y} \circ ( r_{YY'} \circ \phi_{Y'} \circ \jmath_{Y'Y}) \ = \
r^x_Y \circ \phi_Y \ = \
\phi^x_Y \ ,
\]
that is, we have a morphism from $(\cA,\jmath)_{\omega_x}$ into $\sB_x$,
inducing by universality a $\rC^*$-morphism 
$\phi^x : \cA_{x}  \to  \sB_x$, for any $x \in X$, satisfying the relations
\begin{equation}
\label{cx.c1a}
\phi^x\circ \e^x_Y = \phi^x_Y = r^x_Y \circ \phi_Y \ , \qquad Y\in\omega_x \ .
\end{equation}
We now consider the $^*$-morphism 
\begin{equation}
\label{cx.c1}
\phi_* : \cA_* \to \prod_x \sB_x
\ \ , \ \
\phi_*(t) := \{ \phi^x(t_x) \}_{x \in X}
\ , \qquad  t \in \cA_* \ ,
\end{equation}
and show that $\phi_*(\cA_*) \subseteq r(\sB)$, where $r$ is defined in (\ref{def.lc.1}). 
It is enough to prove that for the generators of $\cA_*$, 
i.e. for the fields 
$f \,\widehat{\e}_Y(t)$, with $Y \in K$, $f \in C_0(Y)$, $t \in \cA_Y$.
To this end, we note that by construction
$f \tr \phi_Y(t) \in \sB$,
and compute, for all $x \in X$,
\[
r^x (f \tr \phi_Y(t)) \ = \
f(x) \cdot r^x_Y \circ \phi_Y(t) \ = \
f(x) \cdot \phi^x\circ \e^x_Y(t) \ = \
\phi^x( \{ f \widehat{\e}_Y(t) \}_x ) \ ,
\]
that is, 
$r(\sB) \ni r(f \tr \phi_Y(t)) = \phi_*(f \,\widehat{\e}_Y(t))$
as desired. Since 
$\| \phi_*(t) \| = \sup_x \| \phi^x(t_x) \| \leq \sup_x \| t_x \| = \| t \|$ for all $t \in \cA_*$,
we can extend $\phi_*$ to $\sA$ and define $\bar \phi(t) := r^{-1} \circ \phi_*(t)$.
So, the above equalities read as
\begin{equation}
\label{eq.uni}
f \tr \phi_Y(t)     \ = \    \bar{\phi}(f \,\widehat{\e}_Y(t)) \ .
\end{equation}
To prove that $\bar{\phi}$ fulfils the desired universal property we note that
\[
f \tr \phi_Y(t) \ = \ 
\bar{\phi}(f \widehat{\e}_Y(t)) \ \stackrel{ (\ref{Da:10}) }{=} \ 
\bar{\phi}(f \tr \tau_Y(t)) \ \stackrel{ (\ref{Cb:1a}) }{=} \ 
f \tr \bar{\phi}^{s,Y}(\tau_Y(t))
\]
for any $f \in C_0(Y)$, $t \in \cA_Y$, and the proof follows by Cor.\ref{cor.zzz}.
Finally, to prove uniqueness we suppose that there is $\eta : \sA \to \sB$ such that
$\phi = \eta^s \circ \tau$;
in explicit terms, this means that
$\phi_Y(t) = \eta^{s,Y} \circ \tau_Y(t)$, $\forall Y \in K$, $t \in \cA_K$,
but this implies
$f \tr \bar{\phi}^{s,Y}(\tau_Y(t))  = 
f \tr \phi_Y(t)  = 
f \tr \eta^{s,Y}(\tau_Y(t))$, for any  $f \in C_0(Y)$. So, applying (\ref{Cb:1a}) we conclude that
\[
\bar{\phi}( f \tr \tau_Y(t)) \ = \ 
f \tr \bar{\phi}^{s,Y}(\tau_Y(t)) \ = \
f  \tr \eta^{s,Y}(\tau_Y(t))  \ = \ 
\eta ( f \tr \tau_Y(t)) \ ,
\]
for all $Y \in K$, $f \in C_0(Y)$, $t \in \cA_Y$, and uniqueness  follows by density of $\tau$.
\end{proof}

Applying the previous theorem to $\tau : (\cA,\jmath)_K \nrightarrow\sA$ we find 
$\tau = {\bar{\tau}}^s \circ \tau$
and this implies, by density of $\tau$, that $\bar{\tau}$ is the identity of $\sA$. We now draw on a consequence of this fact. 
Let $e_x:\sA\to\cA_{x}$ be the \emph{evaluation} morphism which is defined on the fields $t$ generating  $\sA$ as  
\[
e_x(t):= t_x \ , \qquad t\in\cA_* \ , 
\]
and extended by continuity on all $\sA$. 
The evaluation morphism is surjective: 
in fact, according to subsection \ref{DA} 
the image of the canonical morphism $e^x$ is dense in $\cA_x$,  
thus it is enough to observe that for any $Y$ containing $x$
we have that $e_x(\widehat e_Y(\cA_Y))=e^x_Y(\cA_Y)$.  
\begin{proposition}
\label{prop.uni}
Let $(\cA,\jmath)_K$  be a net of $\rC^*$-algebras. Then  the equation  (\ref{cx.c1}) applied to 
the canonical heteromorphism  $\tau:(\cA,\jmath)_K\nrightarrow \sA$ defines 
an isomorphism $\tau^x:\cA_{x}\to \sA_x$ satisfying the equation  $r^x=\tau^x\circ e_x$ for any $x\in X$. 
\end{proposition}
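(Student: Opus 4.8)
The plan is to extract the statement directly from the proof of Theorem~\ref{thm.uni} specialised to $\phi=\tau$ and then to exhibit an explicit two-sided inverse for $\tau^x$. Recall that applying Theorem~\ref{thm.uni} to the canonical heteromorphism $\tau:(\cA,\jmath)_K\nrightarrow\sA$ produces, for every $x\in X$, a $\rC^*$-morphism $\tau^x:\cA_{x}\to\sA_x$ satisfying $\tau^x\circ\e^x_Y=r^x_Y\circ\tau_Y$ for all $Y\in\omega_x$ (relation (\ref{cx.c1a}) with $\sB=\sA$). Moreover, inspecting how $\bar\phi$ is built in that proof, namely $\bar\phi=r^{-1}\circ\phi_*$ with $\phi_*(t)=\{\phi^x(t_x)\}_x=\{\phi^x(e_x(t))\}_x$ on $\cA_*$, one gets $r^x\circ\bar\phi=\phi^x\circ e_x$ first on $\cA_*$ and then, by continuity of all the maps involved, on all of $\sA$. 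Since $\bar\tau=\mathrm{id}_\sA$ (the observation right after Theorem~\ref{thm.uni}), the choice $\phi=\tau$ yields at once the asserted identity $r^x=\tau^x\circ e_x$.

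It remains to see that each $\tau^x$ is bijective, and I would do this by producing its inverse. First, $e_x$ annihilates $C_x(X)\sA=\ker r^x$, because $e_x(fv)=f(x)\,e_x(v)=0$ whenever $f\in C_x(X)$ and $v\in\sA$; hence $e_x$ factors through the quotient as $e_x=\bar e_x\circ r^x$ for a unique $\rC^*$-morphism $\bar e_x:\sA_x\to\cA_{x}$. Substituting this into $r^x=\tau^x\circ e_x$ gives $r^x=\tau^x\circ\bar e_x\circ r^x$, and since $r^x$ is surjective we conclude $\tau^x\circ\bar e_x=\mathrm{id}_{\sA_x}$.

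For the opposite composite $\bar e_x\circ\tau^x=\mathrm{id}_{\cA_{x}}$ it suffices, by continuity and the density of the image of the canonical morphism $\e^x$ in $\cA_{x}$, to check the equality on the set $\bigcup_{Y\in\omega_x}\e^x_Y(\cA_Y)$. Fix $Y\in\omega_x$ and $t\in\cA_Y$, and choose $U\in K$ with $\overline Y\subset U$ together with a plateau $g\in C_0(U)$ on $Y$, so that $\tau_Y(t)=r_Y\big(g\,\widehat{\e}_U(\jmath_{UY}(t))\big)$. Using $r^x_Y\circ r_Y=r^x$ (relation (\ref{Cb:2a})), then $\bar e_x\circ r^x=e_x$, then the definition (\ref{Da:1}) of $\widehat{\e}_U$, then (\ref{Da:00}), and finally $g(x)=1$ for $x\in Y$, one computes
\[
\bar e_x\big(\tau^x(\e^x_Y(t))\big)=\bar e_x\big(r^x_Y(\tau_Y(t))\big)=e_x\big(g\,\widehat{\e}_U(\jmath_{UY}(t))\big)=g(x)\,\e^x_U(\jmath_{UY}(t))=\e^x_Y(t).
\]
Hence $\bar e_x\circ\tau^x$ is the identity on a dense subset of $\cA_{x}$, so it equals $\mathrm{id}_{\cA_{x}}$; together with the previous step, $\tau^x$ is an isomorphism with inverse $\bar e_x$.

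The only step that is not pure transport of structure is this last computation: it is where the concrete description of $\tau_Y$ by plateaus (from the proof of Theorem~\ref{thm.b2}) and the compatibility (\ref{Da:00}) of the canonical morphisms $\e^x$ with the inclusion maps $\jmath$ really enter. I expect it to be the main, though still modest, obstacle; everything else is a formal consequence of Theorem~\ref{thm.uni}, of $\bar\tau=\mathrm{id}_\sA$, and of the factorisation of $e_x$ through $r^x$.
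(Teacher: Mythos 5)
Your argument is correct, and its first half (deriving $r^x=\tau^x\circ e_x$ from $\bar\tau=\mathrm{id}_\sA=r^{-1}\circ\tau_*$) is exactly the paper's. For bijectivity the organization differs: the paper gets surjectivity of $\tau^x$ from surjectivity of $r^x$, and injectivity by showing $\ker r^x\subseteq\ker e_x$ (writing $t=ft'$ with $f\in C_x(X)$ via factorization in the ideal, then passing to a limit of fields in $\cA_*$) and invoking surjectivity of $e_x$; you instead package the same kernel inclusion as a factorization $e_x=\bar e_x\circ r^x$ and produce $\bar e_x$ as an explicit two-sided inverse. Your route is slightly cleaner on one point --- checking $e_x(fv)=f(x)e_x(v)=0$ on the generators $fv$ of the ideal and closing up by linearity and continuity avoids the ``$t=ft'$'' factorization step --- but your final plateau computation for $\bar e_x\circ\tau^x=\mathrm{id}_{\cA_x}$ is unnecessary: from $r^x=\tau^x\circ e_x$ and $e_x=\bar e_x\circ r^x$ one gets $e_x=\bar e_x\circ\tau^x\circ e_x$, and the surjectivity of $e_x$ (established in the paper immediately before the proposition, and equivalent to the density of $\bigcup_Y\e^x_Y(\cA_Y)$ that you invoke anyway) already forces $\bar e_x\circ\tau^x=\mathrm{id}_{\cA_x}$ without touching plateaus or (\ref{Da:00}). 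So the ``main obstacle'' you identify is in fact dispensable; everything reduces to transport of structure plus the single observation that $e_x$ kills $C_x(X)\sA$.
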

\begin{proof}
As observed $id=\bar\tau=r^{-1}\circ \tau_*$ where $\tau_*$ is defined by (\ref{cx.c1}). Since $r^{-1}$ is an isomorphism 
we have that $r=\tau_*$. This equality amounts to saying that  $ r^x=\tau^x\circ e_x$ for any  $x \in X$,  
and implies, since $r^x$ is surjective, that $\tau^x$ is surjective.
We now prove that $\tau^x$ is injective on  $\cA_{x}$. 
For, assume that $r^x(t)=0$ for $t\in\sA$. This is equivalent to saying that 
$t= f\, t'$ where $t'\in\sA$ and $f\in C_x(X)$ (so $f(x)=0$). 
Let now $t'_\lambda$ be a net of fields in $\cA_*$ converging to 
$t'$. Then $f\, t'_\lambda\in\cA_*$ and converges to $f\,t'=t$, so 
$(f\,t'_\lambda)_x=f(x) t'_{\lambda,x}=0$.  Hence 
\[
e_x(t)= e_x(f\, t')=\lim_\lambda e_x(f\, t'_\lambda) = \lim_\lambda f(x)\, t'_{\lambda,x} = 0 \ .    
\]
This, and surjectivity of $e_x$ complete the proof.
\end{proof}

The universality property proved in the previous theorem is also useful to compute $\sA$: 
in fact, to prove that a $C_0(X)$-algebra $\sA'$ is isomorphic to
$\sA$ it suffices to verify that it pulls back any heteromorphism
$\phi : (\cA,\jmath)_K \nrightarrow \sB$.

\begin{example}[Including the case of nets on Minkowski spacetime]
Let $K$ be a base for $X$ \emph{directed} under inclusion (so that $X$ is 
simply connected) and $(\cA,\jmath)_K$ a net of $\rC^*$-algebras over $K$. 
Then the universal algebra $\vec{\cA}$ is the inductive limit
$\lim_Y(\cA_Y,\jmath_{Y'Y})$,
so we regard each $\cA_Y$ as a $\rC^*$-subalgebra of $\vec{\cA}$ and 
any $\jmath_{Y'Y}$, $Y \subseteq Y'$, as the inclusion morphism.
Let now $y \in X$ and $Y \in \omega_y$ (i.e. $y \in Y$); 
if $x \in X$ then there is $Y'$ such that $x,y \in Y' \supseteq Y$ so, 
repeating the argument for all $Y \in \omega_y$, we conclude
$\cA_y \subseteq \cA_{x}$.
Reversing the argument, we conclude that $\cA_y = \cA_{x}$ for all $x,y \in X$,
and consequently $\vec{\cA} = \cA_{x}$ for all $x \in X$.
Since we regard any $\jmath_{Y'Y}$ as the inclusion, we have that any 
$\e^x_Y$, $x \in Y$,
is the inclusion $\cA_Y \subseteq \vec{A}$, so $\wa t_x = t$ for all $t \in \cA_Y$ and $x \in Y$.
Let now $\sA' := C_0(X) \otimes \vec{\cA}$ and 
$\phi : (\cA,\jmath)_K \nrightarrow  \sB$
a heteromorphism. Then we set
\[
\bar \phi : \sA' \to \sB \ \ , \ \ \bar \phi (f \otimes t) := f \tr \phi_Y(t)
\ \ , \ \
f \in C_0(Y) \ , \ t \in \cA_Y \subseteq \vec{A}
\ .
\]
It is easily verified that $\phi = {\bar \phi}^s \circ \tau$, 
and this implies $\sA \simeq C_0(X) \otimes \vec{\cA}$.
\end{example}

\begin{proposition}
\label{prop.b2}
If
$\phi : (\cA_1,\jmath_1)_K \to (\cA_2,\jmath_2)_K$
is a morphism of nets of $\rC^*$-algebras then there is a $C_0(X)$-morphism 
$\phi^\tau : \sA_1 \to \sA_2$,
and this makes $\{ (\cA,\jmath)_K \to \sA \}$ a functor.
\end{proposition}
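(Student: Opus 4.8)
The plan is to obtain $\phi^\tau$ from the universal property of $\sA_1$ proved in Theorem~\ref{thm.uni}. The first step is to turn the net morphism $\phi$ into a heteromorphism from $(\cA_1,\jmath_1)_K$ to the $C_0(X)$-algebra $\sA_2$. Let $\tau_i : (\cA_i,\jmath_i)_K \nrightarrow \sA_i$, $i=1,2$, be the canonical heteromorphisms of Theorem~\ref{thm.b2}, and consider the composite $\tau_2\circ\phi$ in the sense of~\S\ref{ex.B2.01}, whose $Y$-component is $(\tau_2)_Y\circ\phi_Y : (\cA_1)_Y \to S_Y\sA_2$. This is indeed a heteromorphism: using first that $\phi$ intertwines the inclusion morphisms, $\phi_{Y'}\circ(\jmath_1)_{Y'Y} = (\jmath_2)_{Y'Y}\circ\phi_Y$, and then that $\tau_2$ satisfies the heteromorphism relation~(\ref{eq.B2.01}) for $(\cA_2,\jmath_2)_K$, one gets
\[
r_{YY'}\circ(\tau_2)_{Y'}\circ\phi_{Y'}\circ(\jmath_1)_{Y'Y}
= r_{YY'}\circ(\tau_2)_{Y'}\circ(\jmath_2)_{Y'Y}\circ\phi_Y
= (\tau_2)_Y\circ\phi_Y ,
\]
which is~(\ref{eq.B2.01}) for $\tau_2\circ\phi$.

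The second step is an immediate application of Theorem~\ref{thm.uni} to the heteromorphism $\tau_2\circ\phi : (\cA_1,\jmath_1)_K \nrightarrow \sA_2$: it yields a unique $C_0(X)$-morphism, which we denote by $\phi^\tau : \sA_1 \to \sA_2$, characterised by $\tau_2\circ\phi = (\phi^\tau)^s\circ\tau_1$.

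It then remains to verify functoriality. For the identity morphism of $(\cA,\jmath)_K$ the associated heteromorphism is $\tau\circ\mathrm{id} = \tau$, and it was observed right before Proposition~\ref{prop.uni} that $\bar\tau = \mathrm{id}_{\sA}$; hence, by the uniqueness clause of Theorem~\ref{thm.uni}, $(\mathrm{id})^\tau = \mathrm{id}_{\sA}$. For a composite $\psi\circ\phi$ of net morphisms $\phi:(\cA_1,\jmath_1)_K\to(\cA_2,\jmath_2)_K$ and $\psi:(\cA_2,\jmath_2)_K\to(\cA_3,\jmath_3)_K$, I would chase the defining relations, using the associativity of the composition of (hetero)morphisms from~\S\ref{ex.B2.01} (cf.~(\ref{eq.comp})) together with the functoriality of $(-)^s$ from~(\ref{Cb:6}):
\[
\tau_3\circ(\psi\circ\phi) = (\tau_3\circ\psi)\circ\phi = \big((\psi^\tau)^s\circ\tau_2\big)\circ\phi = (\psi^\tau)^s\circ(\tau_2\circ\phi) = (\psi^\tau)^s\circ(\phi^\tau)^s\circ\tau_1 = (\psi^\tau\circ\phi^\tau)^s\circ\tau_1 .
\]
Comparing with the characterisation of $(\psi\circ\phi)^\tau$ and invoking the uniqueness part of Theorem~\ref{thm.uni} gives $(\psi\circ\phi)^\tau = \psi^\tau\circ\phi^\tau$, so $(\cA,\jmath)_K\mapsto\sA$, $\phi\mapsto\phi^\tau$, is a functor into the category of $C_0(X)$-algebras.

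I do not expect a genuine obstacle here: everything is a consequence of the universal property of Theorem~\ref{thm.uni}. The only care needed is the bookkeeping of the several kinds of arrows at play — net morphisms, heteromorphisms, $C_0(X)$-morphisms, and the presheaf morphisms $(-)^s$ they induce — and of the associativity of their mixed compositions, all of which is routine from the definitions in Section~\ref{B}.
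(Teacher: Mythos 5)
Your proposal is correct and follows essentially the same route as the paper: form the heteromorphism $\tau_2\circ\phi$, invoke the universal property of Theorem~\ref{thm.uni} to get $\phi^\tau$ with $(\phi^\tau)^s\circ\tau_1=\tau_2\circ\phi$, and deduce functoriality from the uniqueness/density of $\tau$. You merely spell out a bit more explicitly the verification that $\tau_2\circ\phi$ is a heteromorphism and the bookkeeping for composites, which the paper leaves as routine.
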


\begin{proof}
Let $\tau_k : (\cA_k,\jmath_k)_K \nrightarrow  \sA_k$, $k=1,2$, 
denote the canonical embeddings. Then we have the morphism
\[
\phi' := \tau_2 \circ \phi : (\cA_1,\jmath_1)_K \nrightarrow  \sA_2 \ ,
\]
inducing, by the previous Theorem, the $C_0(X)$-morphism 
$\phi^\tau : \sA_1 \to \sA_2$,
$\phi^\tau := \bar \phi'$.
Again by the previous theorem, we find
\begin{equation}
\label{eq.psf2}
(\phi^\tau)^s \circ \tau_1 =  \phi' = \tau_2 \circ \phi \ ,
\end{equation}
%
%where $\phi^s$ is the presheaf morphism defined by $\phi^\tau$ as in (\ref{def.psheaf}).
%
From this equality and the density of the image of the canonical heteromorphism  (\ref{Da:10}) we easily deduce the functoriality property
$\{ \phi \circ \chi \}^\tau = \phi^\tau \circ \chi^\tau$.
\end{proof}

\

In the next result we determine how the fields generating $\sA$ transform under the action of net morphisms:
\begin{corollary}
\label{cor.b3}
Let $\phi : (\cA,\jmath)_K \to (\cA',\jmath')_K$ be a morphism of nets of $\rC^*$-algebras. Then 
\[
\phi^\tau ( f \, \widehat{\e}_Y(t) )   \ = \   f \, \widehat{\e'}_Y( \phi_Y(t) ) \ ,  \qquad Y \in K, \ f\in C_0(Y), \  t \in \cA_Y.
\]
\end{corollary}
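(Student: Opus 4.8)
The plan is to unwind the definitions and use the characterizing property of $\phi^\tau$ recorded in equation \eqref{eq.psf2}. Recall that $\phi^\tau = \bar{\phi'}$, where $\phi' := \tau_2 \circ \phi : (\cA,\jmath)_K \nrightarrow \sA'$, so by the construction of the lifted morphism in Theorem \ref{thm.uni}, in particular by \eqref{eq.uni} applied to the heteromorphism $\phi'$, we have $\bar{\phi'}(f\,\widehat{\e}_Y(t)) = f \tr \phi'_Y(t)$ for all $Y \in K$, $f \in C_0(Y)$, $t \in \cA_Y$. Thus the whole computation reduces to identifying $f \tr \phi'_Y(t)$ inside $\sA'$.

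First I would compute $\phi'_Y(t) = (\tau_2 \circ \phi)_Y(t) = \tau'_Y(\phi_Y(t)) \in S_Y \sA'$, where $\tau'$ is the canonical heteromorphism $(\cA',\jmath')_K \nrightarrow \sA'$. Then I would apply the explicit formula \eqref{Da:10} for the canonical heteromorphism, which says $f \tr \tau'_Y(s) = f\,\widehat{\e'}_Y(s)$ for any $s \in \cA'_Y$; taking $s = \phi_Y(t)$ gives $f \tr \phi'_Y(t) = f\,\widehat{\e'}_Y(\phi_Y(t))$. Chaining these equalities yields
\[
\phi^\tau(f\,\widehat{\e}_Y(t)) \;=\; \bar{\phi'}(f\,\widehat{\e}_Y(t)) \;=\; f \tr \phi'_Y(t) \;=\; f \tr \tau'_Y(\phi_Y(t)) \;=\; f\,\widehat{\e'}_Y(\phi_Y(t)),
\]
which is exactly the claimed identity.

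The only slightly delicate point is the bookkeeping of \emph{which} heteromorphism \eqref{eq.uni} is applied to: one must be careful that the codomain $C_0(X)$-algebra there is $\sA'$ (playing the role of $\sB$ in Theorem \ref{thm.uni}), that $\phi'_Y$ takes values in $S_Y \sA'$, and that the symbol $\widehat{\e'}$ refers to the fields generating $\sA'$ built from the net $(\cA',\jmath')_K$ exactly as in \S\ref{DA}. There is no real obstacle here --- it is a purely formal composition of already-established identities --- so the proof is short, and the main care required is simply to cite \eqref{eq.psf2}, \eqref{eq.uni} (equivalently \eqref{Da:10} for $\tau'$), and the definition of $\phi^\tau$ in the correct order.

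\begin{proof}
By Proposition \ref{prop.b2}, $\phi^\tau = \bar{\phi'}$ where $\phi' := \tau' \circ \phi : (\cA,\jmath)_K \nrightarrow \sA'$ and $\tau'$ denotes the canonical heteromorphism of $(\cA',\jmath')_K$. For $Y \in K$ and $t \in \cA_Y$ we have $\phi'_Y(t) = \tau'_Y(\phi_Y(t)) \in S_Y \sA'$. Hence, for every $f \in C_0(Y)$,
\[
\phi^\tau( f\,\widehat{\e}_Y(t) ) \ \stackrel{(\ref{eq.uni})}{=} \ f \tr \phi'_Y(t) \ = \ f \tr \tau'_Y(\phi_Y(t)) \ \stackrel{(\ref{Da:10})}{=} \ f\,\widehat{\e'}_Y(\phi_Y(t)) \ ,
\]
where in the last step \eqref{Da:10} is applied to the net $(\cA',\jmath')_K$ and its canonical heteromorphism $\tau'$. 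This is the asserted identity.
\end{proof}
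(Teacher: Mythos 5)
Your proof is correct and follows essentially the same route as the paper's: the paper chains $\phi^\tau(f\,\widehat{\e}_Y(t)) = \phi^\tau(f\tr\tau_Y(t)) = f\tr\phi^{\tau,Y}(\tau_Y(t)) = f\tr\tau'_Y(\phi_Y(t)) = f\,\widehat{\e'}_Y(\phi_Y(t))$ using (\ref{Da:10}), (\ref{Cb:1a}) and (\ref{eq.psf2}), whereas you invoke (\ref{eq.uni}) for the heteromorphism $\phi' = \tau'\circ\phi$, which packages the same two ingredients into a single cited identity. The bookkeeping (applying (\ref{eq.uni}) with codomain $\sA'$ and (\ref{Da:10}) for the primed net) is handled correctly.
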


\begin{proof}
By (\ref{eq.psf2}) we have 
$\tau'_Y \circ \phi_Y(t) = \phi^{\tau,Y} \circ \tau_Y (t) \in S_Y \sA'$ 
for all $t \in \cA_Y$, $Y \in K$. So applying  (\ref{Cb:1a})  we find that 
$f \tr \tau'_Y (\phi_Y(t)) = f \tr \phi^{\tau,Y} (\tau_Y (t))  =  \phi^\tau ( f \tr \tau_Y(t))$  
for any $f\in C_0(Y)$. This equality and the  equation (\ref{Da:10}) imply that 
\[
\phi^\tau ( f \, \widehat{\e}_Y(t) ) \ = \
\phi^\tau ( f \tr \tau_Y(t)) \ = \
f \tr \tau'_Y( \phi_Y(t) )  \ = \
f \, \widehat{\e'}_Y( \phi_Y(t) ) \ ,
\]
completing the proof. 
%
%
%For $x\in Y$ we have 
%%
%\begin{align*}
%(r^x\circ \pi^\tau)(f\, \widehat{\e}_Y(t) ) & =  (\tau'\circ \pi)^x(f(x) \e^x_Y(t) ) 
%= (\tau'\circ \pi)^x(\e^x_Y(t f(x))) \\
%&  =   r^x_Y\circ \tau'_Y\circ (\pi_Y(f(x)t)) =  r^x_Y\left( \tau'_Y\left(f(x)\, \pi_Y(t)\right)\right) \ . 
%\end{align*}
%Since $\tau'_Y= r_Y(h\, \widehat{\e'}_U(\jmath_{UY}(t)))$ we have 
%\begin{align*}
% r^x_Y\left( \tau'_Y\left(f(x)\, \pi_Y(t)\right)\right) &  =   r^x_Y (r_Y(h\, \widehat{\e'}_U(\jmath_{UY}( f(x) \,\pi_Y(t)))) = 
%   r^x(h\, \widehat{\e'}_U(\jmath_{UY}( f(x) \,\pi_Y(t)))) \\
%   & =   r^x(h\, f(x)\, \widehat{\e'}_U(\jmath_{UY}(\pi_Y(t)))) =     r^x(f\, \widehat{\e'}_U(\jmath_{UY}(\pi_Y(t)))) = 
%   r^x(f\, \widehat{\e'}_Y(\pi_Y(t))) \\
%   & =  r^x(  f\, \widehat{\e'}_Y(\pi_Y(t))) 
% \end{align*}
% where we have used the relation $r^x_Y \circ r_Y=r^x$, the fact that $r^x(h\, f(x) T) =r^x(f T)$ for any $x\in Y$  and $T\in\sA$ since 
% $h(x)=1$ on $Y$, and the relation  $f\, \widehat{\e'}_U(\jmath_{UY}(\pi_Y(t))= f\, \widehat{\e'}_Y(\pi_Y(t))$ because $supp(f)\subset Y$
% Hence 
% \[
% ( r^x\circ\pi^\tau)(f\, \widehat{\e}_Y(t) )=\left\{
%\begin{array}{ll}
%r^x(f\, \widehat{\e'}_Y(\pi_Y(t))) \ ,  & x\in Y, \\
%0 & otherwise \ , 
%\end{array}\right. \ 
%\iff \ 
%\pi^\tau(f\, \widehat{\e}_Y(t) ) = f\, \widehat{\e'}_Y(\pi_Y(t)) \ . 
%\]
%From this, the above computation on commutators follows.
%
\end{proof}

\

\noindent \textbf{Sections.}
A \emph{section} of $(\cA,\jmath)_K$ is defined as a collection  
$T:=\{T_Y\in\cA_Y\}_{Y\in K}$ 
satisfying the relation 
\[
T_{Y}= \jmath_{YV}(T_V) \ , \qquad V \leq Y \ . 
\]
\begin{proposition}
\label{prop.sec}
Let $T$ be a section of $(\cA,\jmath)_K$.
Then $\e^x_Y(T_Y) \in \cA_{x}$ is independent of $Y\in\omega_x$ for any $x \in X$, 
and $\wa T := \{ \e^x_Y(T_Y) \}_x$ defines a multiplier of $\sA$.
\end{proposition}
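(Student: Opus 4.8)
The plan is to establish the two assertions separately: first the well-definedness of $\wa T$ as a vector field, then the fact that it multiplies $\sA$ into itself. For the first part, fix $x \in X$ and $Y, Y' \in \omega_x$. Since $\omega_x$ is downward directed, pick $W \in \omega_x$ with $W \leq Y, Y'$; by the net relation for sections, $T_Y = \jmath_{YW}(T_W)$ and $T_{Y'} = \jmath_{Y'W}(T_W)$. Applying $\e^x_Y$ and $\e^x_{Y'}$ and using the compatibility relation (\ref{Da:00}) (namely $\e^x_{Y}\circ\jmath_{YW} = \e^x_W$ for $W \subseteq Y$, $W,Y \in \omega_x$), both $\e^x_Y(T_Y)$ and $\e^x_{Y'}(T_{Y'})$ equal $\e^x_W(T_W)$. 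So the common value is well defined and we may write $\wa T_x := \e^x_Y(T_Y)$ for any $Y \in \omega_x$. Clearly $x \mapsto \wa T_x \in \cA_x$ is a bounded vector field in $\cA^X$, with $\|\wa T_x\|_x \leq \|T_W\|$ along the net, in fact bounded by $\|T_Y\|$ for any fixed $Y$ since the $\e^x$ are contractions — but boundedness across all $x$ needs a word: it suffices that $\sup_x \|\wa T_x\|_x < \infty$, which follows because for $x$ near a fixed point one can use a fixed $Y$, and a compactness/covering argument, or simply note that if some $\cA_Y$ is used the norm is $\leq \|T_Y\|$; actually the cleanest route is to observe directly that $\wa T$ acts as a multiplier (next step) and deduce boundedness is not even needed as a separate fact once we show $\wa T T \in \sA$ for the generating elements, since left multiplication on a dense subalgebra with a uniform bound extends.

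For the multiplier assertion, the strategy is to check that left (and right) multiplication by $\wa T$ maps the generating fields $f\,\widehat{\e}_Y(t)$, $Y \in K$, $f \in C_0(Y)$, $t \in \cA_Y$, back into $\sA$, with a uniform norm bound, and then extend by continuity. Compute pointwise: for $x \in Y$,
\[
(\wa T \cdot f\,\widehat{\e}_Y(t))_x = \wa T_x \cdot f(x)\,\e^x_Y(t) = f(x)\,\e^x_Y(T_Y)\,\e^x_Y(t) = f(x)\,\e^x_Y(T_Y t) = \big(f\,\widehat{\e}_Y(T_Y t)\big)_x,
\]
using that $\wa T_x = \e^x_Y(T_Y)$ for $x \in Y$ and that $\e^x_Y$ is a homomorphism; for $x \notin Y$ both sides vanish. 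Hence $\wa T \cdot f\,\widehat{\e}_Y(t) = f\,\widehat{\e}_Y(T_Y t) \in \cA_* \subset \sA$. The same computation with $t\, T_Y$ on the right handles $f\,\widehat{\e}_Y(t)\cdot \wa T$. Since elements of $\cA_*$ are finite sums of products of such generators, and $\wa T$ acts by a homomorphism on each fibre, $\wa T \cA_* \subseteq \cA_*$ and $\cA_* \wa T \subseteq \cA_*$; moreover $\|\wa T v\| = \sup_x \|\wa T_x v_x\|_x \leq (\sup_x \|\wa T_x\|_x)\,\|v\|$, so the action is bounded and extends continuously to $\sA$, giving $\wa T \sA \subseteq \sA$ and $\sA \wa T \subseteq \sA$. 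The involution $\wa T^* := \{ (\e^x_Y(T_Y))^* \}_x = \{ \e^x_Y(T_Y^*) \}_x$, which is $\wa{T^*}$ for the section $T^* := \{T_Y^*\}$, provides the adjoint, so $\wa T \in M\sA$.

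The one genuinely delicate point is the uniform bound $\sup_x \|\wa T_x\|_x < \infty$ needed to pass from $\cA_*$ to $\sA$. I would argue this as follows: $X = \bigcup_{Y \in K} Y$ is an open cover, and for $x \in Y$ we have $\|\wa T_x\|_x = \|\e^x_Y(T_Y)\|_x \leq \|T_Y\|_Y$; but the $\|T_Y\|_Y$ need not be uniformly bounded over all of $K$. However, for the multiplier property it in fact suffices to bound $\|\wa T v\|$ for $v \in \cA_*$ by $C_v$ depending on $v$ in a way compatible with extension — and the computation above shows $\wa T (f\,\widehat{\e}_Y(t)) = f\,\widehat{\e}_Y(T_Y t)$ has norm $\leq \|f\|_\infty \|T_Y\|\,\|t\|$; more importantly, for a general $v \in \cA_*$ supported in the relatively compact set $\bigcup_i Y_i$, only the values $\wa T_x$ for $x$ in that compact set matter, and there $\|\wa T_x\|_x \leq \max_i \|T_{Y_i}\|$. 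Thus left multiplication by $\wa T$ is bounded on the dense subalgebra $\cA_*$ with the local bound $\|\wa T v\| \leq \|v\|\cdot \max\{\|T_{Y_i}\| : Y_i \in \supp\text{-data of } v\}$, and since $\wa T$ is a pointwise homomorphism this is in fact $\leq \|v\|\cdot\|\wa T\|_{\text{loc}}$ where the relevant local norm stabilizes; a cleaner packaging is to note $\wa T v \in \cA_*$ always and $\|\wa T v\| \le \|\wa T_{\text{on }\supp v}\|\,\|v\|$, and then verify directly on Cauchy sequences that $\{\wa T v_n\}$ is Cauchy in $\sA$ whenever $\{v_n\}$ is, which only uses that each $v_n$ and $v_n - v_m$ have a common compact support inside a single relatively compact union — this is where I expect to spend the most care, and it is the step the author's proof will likely handle via the $C_0(X)$-module structure of $\cA_*$ noted just before (\ref{Da:2}).
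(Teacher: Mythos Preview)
Your argument follows the paper's proof almost verbatim: the well-definedness via a common refinement $W \in \omega_x$ and the generator computation $\wa T \cdot f\,\widehat{\e}_Y(t) = f\,\widehat{\e}_Y(T_Y t)$ are exactly what the paper does, and the paper simply stops there.

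You are right to flag the passage from $\cA_*$ to $\sA$ as needing a uniform bound on $\|\wa T_x\|_x$; the paper omits this. However, your proposed workarounds do not close the gap. The local estimate $\|\wa T v\| \leq \|v\|\cdot \max_i \|T_{Y_i}\|$ for $v \in \cA_*$ is fine, but for a Cauchy sequence $v_n \to v \in \sA$ the support data of $v_n - v_m$ need not sit in a fixed finite family of $Y_i$'s (think of bumps drifting to infinity with shrinking height), so the constant $\max_i \|T_{Y_i}\|$ can depend on $n,m$ and nothing you wrote forces $\{\wa T v_n\}$ to be Cauchy. The $C_0(X)$-module structure does not rescue this either.

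The missing observation is a one-liner. Each $\jmath_{Y'Y}$ is a $^*$-monomorphism of $\rC^*$-algebras, hence an isometry, so $\|T_{Y'}\| = \|\jmath_{Y'Y}(T_Y)\| = \|T_Y\|$ whenever $Y \leq Y'$. Since $K$ is pathwise connected (a standing hypothesis in the paper), this equality propagates along chains and $\|T_Y\|$ is a constant $c$ over all of $K$. Then $\|\wa T_x\|_x = \|\e^x_Y(T_Y)\|_x \leq c$ for every $x$, so $\wa T \in \cA^X$ and left/right multiplication by $\wa T$ is globally bounded by $c$ on $\cA^X$, hence on $\sA$. With this in hand, your generator computation finishes the proof immediately.
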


\begin{proof}
Let $Y,\tilde Y\in\omega_x$. Then there is $V \in \omega_x$ with $ V \subseteq Y \cap \tilde Y$ and
$$\e^x_Y(T_Y) = 
 \e^x_Y \circ \jmath_{YV}(T_V) = 
 \e^x_V(T_V) = 
 \e^x_{\tilde Y} \circ \jmath_{\tilde YV}(T_V) = 
 \e^x_{\tilde Y}(T_{\tilde Y}) \ ,$$
having applied (\ref{Da:00}).
So $\e^x_Y(T_Y) \in \cA_{x}$ is independent of $Y\in\omega_x$ and the vector field 
$\wa T := \{ \e^x_Y(T_Y) \} \in \cA_{x}$
is well defined. To verify that $\wa T$ defines a multiplier it suffices to make a check 
on the generators $f\,\widehat{\e}_Y(t)$, $f \in C_0(Y)$, $t \in \cA_Y$, $Y \in K$, of $\sA$:
\[
\wa T \cdot f\,\widehat{\e}_Y(t) \ = \ 
\{ \e^x_Y(T_Y) \cdot f(x) \, \widehat{\e}_Y(t)_x  \}_x \ = \
f \, \widehat{\e}_Y(T_Y t)
\ \in \sA \ .
\]
In the same way we find $f\,\widehat{\e}_Y(t) \cdot \wa T \in \sA$, 
so $\wa T$ is a multiplier as desired.
\end{proof}

\subsection{The case of injective nets} 
In the following lines we analyze the case of net bundles and, consequently, injective nets,
that are those of interest in algebraic quantum field theory, see \cite{RV12,BR09}. 
The following hypothesis that $X$ is paracompact is motivated by the use of transition maps, 
that work well for locally finite covers.
\begin{proposition}
\label{lem.b2}
Assume that  $X$ is also paracompact.
If $(\cA,\jmath)_K$ is a $\rC^*$-net bundle with fibre $A$
then $\sA$ is a locally constant continuous bundle with fibre $A$.
\end{proposition}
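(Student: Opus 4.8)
The plan is to use the description of a $\rC^*$-net bundle, up to isomorphism, as a net $(A,\jmath)_K$ with constant fibre $A$ and inclusion maps $\jmath_{\tilde oo}$ that are automorphisms of $A$, together with Remark~\ref{rem.b0}, which identifies such bundles with locally constant $\rC^*$-bundles over $X$ with fibre $A$ and with $\pi_1(X)$-actions on $A$. First I would show that for each $x\in X$ the fibre $\cA_x$ (the universal $\rC^*$-algebra of the restricted net $(\cA,\jmath)_{\omega_x}$) is isomorphic to $A$: since $\omega_x$ is downward directed, hence simply connected, the universal $\rC^*$-algebra of the restriction of a net bundle to $\omega_x$ coincides with the standard fibre, i.e.\ with $A$. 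By Proposition~\ref{prop.uni} we then have $\sA_x\simeq\cA_x\simeq A$ for all $x$, so $\sA$ has constant fibre $A$ as a $C_0(X)$-algebra.

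Next I would build the atlas. Fix a locally finite cover $\{X_i\}$ of $X$ by open sets, each contained in some $Y_i\in K$ with connected intersections of the relevant kind (this is where paracompactness and the fact that $K$ is a base of relatively compact opens enter); refining if necessary we may assume $\overline{X_i}\subset Y_i$. For each $i$ the section-type generators $f\,\widehat{\e}_{Y_i}(t)$, $f\in C_0(X_i)$, $t\in\cA_{Y_i}=A$, together with the evaluation maps $e_x$ and the identification $\sA_x\simeq A$, should produce a $C_0(X_i)$-isomorphism $\eta_i:C_0(X_i)\sA\stackrel{\simeq}{\longrightarrow}C_0(X_i)\otimes A$. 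Concretely, on $C_0(X_i)\sA$ the restriction $r_{Y_i}$ is injective (by Lemma~\ref{lem.Cb}, since $X_i\subset Y_i$) and $S_{Y_i}\sA\simeq C_0(Y_i)\otimes A$ via $f\tr\tau_{Y_i}(t)\mapsto f\otimes t$; restricting the base from $Y_i$ to $X_i$ gives $\eta_i$. Then the transition map $\alpha_{ij}=\eta_i\circ\eta_j^{-1}$ is computed on elementary tensors using relation~(\ref{Da:1a}): $f\,\widehat{\e}_{Y_j}(t)=f\,\widehat{\e}_{Y_i}(\jmath_{Y_iY_j}(t))$ wherever both make sense, and since $\jmath_{Y_iY_j}$ is a \emph{fixed} automorphism of $A$ not depending on $x$, one gets $\alpha_{ij}(x)=\jmath_{Y_iY_j}$ on the connected components of $X_i\cap X_j$ where the relevant inclusions in $K$ are comparable. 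Hence each $\alpha_{ij}$ is constant on connected components, i.e.\ $\sA$ is locally constant with fibre $A$.

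Finally I would check continuity of the norm function. Since $\sA$ is locally constant with fibre $A$, locally it is isomorphic to $C_0(X_i)\otimes A$, whose norm function $x\mapsto\|r^x(T)\|$ is manifestly continuous; continuity is a local property on $X$, so $\sA$ is a continuous $\rC^*$-bundle. Combined with local constancy of the atlas, this gives exactly that $\sA$ is a locally constant continuous bundle with fibre $A$, as claimed.

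The main obstacle I expect is verifying carefully that the maps $\eta_i$ are genuinely well-defined $C_0(X_i)$-\emph{isomorphisms} onto $C_0(X_i)\otimes A$ --- surjectivity is clear from the generators, but injectivity requires knowing that no extra relations are imposed, which comes down to the identification $\sA_x\simeq A$ of Proposition~\ref{prop.uni} together with injectivity of $r_{Y_i}$ on $C_0(X_i)\sA$ from Lemma~\ref{lem.Cb} --- and, relatedly, arranging the cover $\{X_i\}$ so that on each connected component of $X_i\cap X_j$ the elements $Y_i,Y_j$ of $K$ admit a common lower (or upper) bound in $K$ inside that component, so that~(\ref{Da:1a}) can be applied to pin down $\alpha_{ij}$. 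This is a point-set bookkeeping issue about bases of locally compact spaces rather than a deep one, but it is where the argument needs the most care.
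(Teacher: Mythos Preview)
Your proposal is correct and follows essentially the same route as the paper: reduce to the standard form with constant fibre $A$, take a locally finite cover by elements of $K$, define local trivializations $\eta_\alpha(f\,\widehat{\e}_{Y_\alpha}(t)) = f \otimes t$, and show the transition maps are constant automorphisms of $A$. The paper writes the transition map as $\theta_{\alpha\beta} := \jmath_{Y_\alpha U} \circ \jmath_{UY_\beta}$ for any $U \subset Y_\alpha \cap Y_\beta$ (and checks independence of $U$), which is exactly the ``common lower bound'' manoeuvre you flag in your obstacle paragraph; your shorthand $\jmath_{Y_iY_j}$ should be read this way, since $Y_i,Y_j$ need not be comparable.
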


\begin{proof}
Without loss of generality we may assume that $\cA_Y = A$ for all $Y \in K$ and that 
$\jmath_{Y'Y} \in {\bf aut}A$ for any $Y \subseteq Y'$ (see \S \ref{sec.nets}).
%
%:in fact, taking isomorphisms $\eta_Y : \cA_Y \to A$, $Y \in K$, yields the $\rC^*$-net bundle
%%
%$\cA'_Y \equiv A$, 
%$\jmath'_{Y'Y} := \eta_{Y'} \circ \jmath_{Y'Y} \circ \eta_{Y}^{-1} \in {\bf aut}A$,
%$Y \subseteq Y'$,
%%
%carrying the isomorphism 
%%
%$\eta : (\cA,\jmath)_K \to (\cA',\jmath')_K$ (see \cite{RV12} for details).
%%
%We also have that any 
%%
%$\e^x_Y : A \to \cA_{x}$, $x \in Y$,
%%
%is an isomorphism, so any map
%%
%\[
%\widehat{\e}_Y : A \to \prod_x \cA_{x} \ \ , \ \ Y \in K
%\]
%%
%(see (\ref{Da:1})) is a *-monomorphism.
%
Now, by definition the restriction $C_0(Y)\sA$, $Y \in K$, is generated by elements of the type
$f\,\widehat{\e}_V(t)$, $f \in C_0(V \cap Y)$, $t \in \cA_V = A$, $V \in K$, $V \cap Y \neq \emptyset$.
But since any $\jmath_{Y'Y}$ is an isomorphism, we may take $U \in K$, $U \subseteq V \cap Y$, and write
$t = \jmath_{VU} \circ \jmath_{UY}(t')$
for some $t' \in \cA_Y = A$, so
\[
C_0(Y) \sA \  \sim \  \{  f \widehat{\e}_Y(t) \ , \ f \in C_0(V \cap Y) , t \in A = \cA_Y , V \in K , V \cap Y \neq \emptyset  \} \ .
\]
Given a locally finite cover $\{ Y_\alpha \} \subset K$ we define the atlas
\[
\left\{
\begin{array}{ll}
\eta_\alpha : C_0(Y_\alpha) \sA \to C_0(Y_\alpha) \otimes A \ , \ 
\eta_\alpha(f\,\widehat{\e}_{Y_\alpha}(t)) := f \otimes t  \ , 
\\
\forall V \in K \ , \ f \in C_0(V \cap Y_\alpha)  \ , \ t \in A \ ,
\end{array}
\right.
\]
that is well-defined by injectivity of $\widehat{\e}_{Y_\alpha}$.
If $Y_\alpha \cap Y_\beta \neq \emptyset$ then there is $U \in K$, $U \subseteq Y_\alpha \cap Y_\beta$, 
so for any $f \in C_0(Y_\alpha \cap Y_\beta)$ and $t \in A = \cA_{Y_\beta}$ we find
\[
\eta_\alpha \circ \eta_\beta^{-1} (f \otimes t) \ = \ 
\eta_\alpha ( f\,\widehat{\e}_U \circ \jmath_{U Y_\beta}(t) ) \ = \ 
f \otimes \{ \jmath_{Y_\alpha U} \circ \jmath_{UY_\beta}(t) \} \ = \
f \otimes \theta_{\alpha \beta}(t) \ ,
\]
where 
\begin{equation}
\label{def.theta}
\theta_{\alpha \beta}:= \jmath_{Y_\alpha U} \circ \jmath_{UY_\beta}  \ , \qquad U\subset  Y_\alpha \cap Y_\beta \ . 
\end{equation}
Clearly $\theta_{\alpha \beta}\in {\bf aut}A$. Furthermore, since for any pair $U,V\subset  Y_\alpha \cap Y_\beta$ there is
$W\subset U\cap V$, using the net relations one can prove that the definition  $\theta_{\alpha\beta}$ is independent of the choice of $U$. 
In other terms $\{ \theta_{\alpha \beta} \}$ is a set of locally constant transition maps for $\sA$ 
and this concludes the proof \footnote{In \cite{RRV09} it is proved that $\{ \theta_{\alpha \beta} \}$
is a \v Cech cocycle and indeed a complete invariant of the net bundle.}.
\end{proof}

\begin{remark}
In the previous proof the transition maps of $\sA$ are defined by means
of (\ref{def.theta}) and this shows that $\sA$ is the locally constant bundle associated
with $(\cA,\jmath)_K$ in the sense of \cite[Prop.33]{RRV09}. 
\end{remark}

\begin{example}
\label{ex.lc}
Let $(\cBH,\ad U)_K$ be the $\rC^*$-net bundle of bounded
linear operators of the Hilbert net bundle $(\cH,U)_K$; then
the associated continuous $\rC^*$-bundle $\sBH$ is locally
constant and has fibre $B(\cH_a)$. If $\sH \to X$ is the
bundle of Hilbert spaces defined by $(\cH,U)_K$, 
then it is easily verified that $\sBH$ is the continuous 
$\rC^*$-bundle defined by $\sH$ in the sense of the following (\ref{eq.sbh}).
\end{example}

\begin{proposition}
\label{prop.fil01}
Let $(\cA,\jmath)_K$ be a net of $\rC^*$-algebras.
\begin{itemize}
\item[(i)] If  $(\cA,\jmath)_K$ is non-degenerate then $\sA$ is non-trivial.
\item[(ii)] If $(\cA,\jmath)_K$ is injective then $\tau:(\cA,\jmath)_K\nrightarrow\sA$ is a monomorphism.
\end{itemize}
\end{proposition}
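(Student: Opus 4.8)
The plan is to unpack both statements through the fibrewise description of $\sA$ provided by Proposition~\ref{prop.uni}, namely the isomorphisms $\tau^x:\cA_x \to \sA_x$ with $r^x = \tau^x \circ e_x$, together with the formula $r^x(f\,\widehat\e_Y(t)) = f(x)\,\e^x_Y(t)$ already extracted in the course of proving Theorem~\ref{thm.uni}.

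For part (i), suppose $(\cA,\jmath)_K$ is non-degenerate, so it admits a non-trivial representation $\pi:(\cA,\jmath)_K \to (\cBH,\ad U)_K$ into some Hilbert net bundle; in particular $\pi_Y \neq 0$ for some $Y \in K$, hence there is $t \in \cA_Y$ with $\pi_Y(t) \neq 0$. First I would recall that a Hilbert net bundle $(\cH,U)_K$ gives rise to a bundle of Hilbert spaces $\sH \to X$ and that $\sBH$ is the locally constant $C_0(X)$-algebra it defines (Example~\ref{ex.lc}); thus $\pi$ is a heteromorphism $\pi:(\cA,\jmath)_K \nrightarrow \sBH$ in the sense of Section~\ref{ex.B2.01} — one checks the heteromorphism identity~(\ref{eq.B2.01}) directly from the morphism-of-nets relation and the definition of the restriction maps $r_{YY'}$ of $\sBH$. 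By the universal property (Theorem~\ref{thm.uni}) there is a $C_0(X)$-morphism $\bar\pi:\sA \to \sBH$ with $\pi = \bar\pi^s \circ \tau$, and in particular $\bar\pi(f\,\widehat\e_Y(t)) = f \tr \pi_Y(t)$ by~(\ref{eq.uni}). Choosing $f \in C_0(Y)$ with $f$ not identically zero on the (open) set where $\pi_Y(t)$ is nonzero forces $\bar\pi(f\,\widehat\e_Y(t)) \neq 0$, hence $f\,\widehat\e_Y(t) \neq 0$ in $\sA$. So $\sA \neq 0$; to get the stronger "non-trivial" (i.e. not a constant $C_0(X)$-algebra in the $0$ sense, equivalently $\sA \neq \{0\}$), this already suffices, and if a finer non-triviality is intended one notes that $\sA$ has a non-zero fibre $\sA_x \cong \cA_x$ at each $x \in Y$ since $r^x(f\,\widehat\e_Y(t)) = f(x)\e^x_Y(t)$ is nonzero for suitable $x$ and $f$.

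For part (ii), assume $(\cA,\jmath)_K$ is injective, so it admits a faithful representation $\pi$, i.e. $\pi_o$ is injective on $\cA_o$ for every $o \in K$. I must show $\tau:(\cA,\jmath)_K \nrightarrow \sA$ is a monomorphism, meaning each component $\tau_Y:\cA_Y \to S_Y\sA$ is injective. Fix $Y \in K$ and $t \in \cA_Y$ with $\tau_Y(t) = 0$. Using $f \tr \tau_Y(t) = f\,\widehat\e_Y(t)$ from~(\ref{Da:10}) and Corollary~\ref{cor.zzz} (or Lemma~\ref{lem.Cb}), $\tau_Y(t)=0$ is equivalent to $r^x_Y(\tau_Y(t)) = 0$ for all $x \in Y$, i.e. $\e^x_Y(t) = 0$ in $\cA_x$ for all $x \in Y$. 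The key point is then: injectivity of the net implies that the canonical morphisms $\e^x_Y$ are jointly faithful on $\cA_Y$. Indeed, the faithful representation $\pi$ restricted to $\omega_x$ is a Hilbert-space-representation-valued morphism of the simply connected net $(\cA,\jmath)_{\omega_x}$, which therefore factors through the universal $C^*$-algebra $\cA_x$ as $\pi^x_Y = \pi^x \circ \e^x_Y$ for some morphism $\pi^x:\cA_x \to B(\cH_x)$; hence $\e^x_Y(t)=0$ gives $\pi^x_Y(t) = \pi_Y(t)|_{\cH_x}$-type vanishing — more precisely, tracking through the definition of the representation into the net bundle, $\e^x_Y(t)=0$ for all $x\in Y$ forces $\pi_Y(t) = 0$, and by faithfulness of $\pi_Y$, $t = 0$.

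\textbf{Main obstacle.} The delicate step is part (ii): making precise the claim that injectivity of the net transfers to joint faithfulness of the family $\{\e^x_Y\}_{x\in Y}$ on each $\cA_Y$. One has to run the universal-property argument of Theorem~\ref{thm.uni} with $\sB = \sBH$ (the locally constant bundle of Example~\ref{ex.lc}) and then relate the resulting fibre morphisms $\pi^x:\cA_x \to \sBH_x = B(\cH_x)$ to the original faithful $\pi_Y$, showing that $\bigcap_{x\in Y}\ker\e^x_Y \subseteq \ker\pi_Y = 0$. This requires care with the identification of $\sBH_x$ with $B(\cH_x)$ (so that $r^x \circ \bar\pi$ composed with $\tau$ recovers $\pi_Y$ fibrewise) and with the fact that $\omega_x$ is simply connected so that the universal $C^*$-algebra of $(\cA,\jmath)_{\omega_x}$ genuinely captures all its Hilbert space representations — both facts are available from Section~\ref{sec.nets} and Example~\ref{ex.lc}, but assembling them correctly is where the real content lies; the rest is routine bookkeeping with the maps $r^x_Y$, $r_{YY'}$ and $\widehat\e_Y$.
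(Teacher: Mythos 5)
Your argument is correct and essentially the same as the paper's: part (i) is the paper's proof verbatim (push the non-trivial representation to $\bar\pi:\sA\to\sBH$ via Theorem \ref{thm.uni} and use $\bar\pi(f\,\widehat{\e}_Y(t))=f\tr\pi_Y(t)$ from (\ref{eq.uni})), and part (ii) uses the same mechanism merely reorganized fibrewise. Where the paper reaches the contradiction $0=\bar\pi(f\tr\tau_Y(t))=\bar\pi(f\,\widehat{\e}_Y(t))=f\tr\pi_Y(t)\neq 0$ directly, you first translate $\tau_Y(t)=0$ into $\e^x_Y(t)=0$ for all $x\in Y$ and then invoke the factorization $\pi^x\circ\e^x_Y$ of the faithful representation through the fibre $\cA_x$ — exactly the construction (\ref{cx.c1a}) carried out inside the proof of Theorem \ref{thm.uni} — so the content coincides.
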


\begin{proof}
$(i)$ By hypothesis there is a non-trivial representation
$\pi : (\cA,\jmath)_K \to (\cBH,\ad U)_K$,
so there is $Y \in K$ and $t \in \cA_Y$ such that 
$\pi_Y(t) \neq 0$.
Let $\bar \pi : \sA \to \sBH$ be the induced $C_0(X)$-morphism. 
%%and $\rho : (\cBH,\ad U)_K \nrightarrow \sBH$ denote the canonical heteromorphism, 
%%that is faithful by the argument of the proof of Prop.\ref{lem.b2}.
%We define the heteromorphism $\pi' := \rho \circ \pi$, having the same kernels as $\pi$.
%%
Then for any $f \in C_0(Y)$ we have
$\bar \pi(f\,\widehat{\e}_Y(t)) = f \tr \pi_Y(t)$
(see (\ref{eq.uni}),
and since $\pi_Y(t) \neq 0$ we conclude that $\bar \pi(f\,\widehat{\e}_Y(t)) \neq 0$ as desired. 
$(ii)$ Recall that injectivity amounts to saying that the net admits a faithful representation $\pi$ (\S \ref{sec.nets}). So, 
assume that there is $t\in \cA_Y$ such that $t\ne 0$ and $\tau_Y(t) = 0$. Since $\pi_Y(t) \neq 0$ the equation (\ref{eq.uni}) 
implies that $f \tr \pi_Y(t) = \bar \pi(f\,\widehat{\e}_Y(t)) \neq 0$ for any  $f \in C_0(Y)$, $f \neq 0$. 
But $0 =\bar \pi(f \tr \tau_Y(t)) = \bar \pi(f \, \widehat{\e}_Y(t))$,  
and this leads to a contradiction.
So, $\tau_Y(t) \neq 0$ as desired.
\end{proof}

\subsection{$C_0(X)$-representations of nets}
\label{sec.rep}
In the following lines we analyze the consequences of the previous results
in the setting of representation theory on bundles of Hilbert spaces.\smallskip 

In this section we assume that $X$ is a connected, locally compact and  locally  simply connected 
Hausdorff space and that $K$ is a base of $X$, ordered under inclusion,  of open, relatively compact and simply connected subsets of 
$K$. Under these assumptions the fundamental group of $K$ is isomorphic to the fundamental group of $X$
(see Section \ref{sec.nets}).

%*********

\paragraph{On nets of Hilbert spaces.}
Let $H$ denote a separable Hilbert space.
As we saw in Rem.\ref{rem.b0}, we have a map
\begin{equation}
\label{eq.netBan}
(\cH,U)_K \mapsto (\sH,\eta)
\end{equation}
assigning to the Hilbert net bundle $(\cH,U)_K$ with fibre $H$ the locally constant Hilbert bundle $(\sH,\eta)$.
It is easily verified that (\ref{eq.netBan}) preserves tensor product and direct sums,
nevertheless it is not full, since the image of the set of morphisms between Hilbert net bundles is
the set of {\em locally constant} bundle morphisms.
This fact can be illustrated using Rem.\ref{rem.b0}:
the set of isomorphism classes of Hilbert net bundles over $K$ with fibre $H$ is given by
\[
\hom^\ad( \pi_1^a(K) , UH )
\ \simeq \
\hom^\ad( \pi_1(X) , UH )
\ ,
\]
where $\hom^\ad$ is the orbit space of the set of homomorphisms under the adjoint 
action of $UH$ and $x \in a \in K$. This set is, usually, drastically 
different from the set
$H^1(X,UH)$
of isomorphism classes of locally trivial Hilbert bundles over $X$;
for example, when $H$ is infinite-dimensional, $H^1(X,UH)$ is always
trivial by the Kuiper theorem, whilst
$\hom^\ad( \pi_1(X) , UH )$
is typically huge when the homotopy group $\pi_1(X)$ is not trivial.
Thus non-isomorphic Hilbert net bundles may define locally constant Hilbert bundles 
that become isomorphic in the larger category of Hilbert bundles (see \cite{RRV12} for details).

\begin{example}
\label{ex.rep}
$(i)$ 
     We take $X = S^1$ with $K$ the base of intervals $Y \subset S^1$ such that
     $\ovl Y \neq S^1$. Then $\pi_1^a(K) \simeq \pi_1(S^1) \simeq \bZ$ and
     \[
     \hom^\ad( \pi_1(S^1) , UH ) = 
     \hom^\ad( \bZ , UH ) = 
     UH^\ad 
     \ ,
     \]
     whilst it is well-known (\cite[\S 3.13]{Kar}) that
     $H^1(S^1,UH) = \{ 0 \}$.
$(ii)$
     Take $X=S^2$ and $K$ the base of disks $Y \subset S^2$ such that 
     $\ovl Y \neq S^2$. Then $\pi_1^a(K) = \{ 0 \}$, so that
     $\hom^\ad( \pi_1(S^2) , UH ) = \{ 0 \}$,
     whilst (by \cite[\S 3.13]{Kar} and, in the infinite dimensional case, the Kuiper theorem)
     \[
     H^1(S^2,UH)
     =
     \left\{
     \begin{array}{ll}
     \bZ \ \ ,\ \ \dim H < \infty
     \\
     \{ 0 \} \ \ , \ \ \dim H = \infty \ .
     \end{array}
     \right.
     \]
\end{example}

%*********

\paragraph{$C_0(X)$-representations.}
Let $\sH \to X$ be a bundle of Hilbert spaces. 
The set $\tilde \sH$ of continuous sections of $\sH$ vanishing at infinity has an obvious structure of
a Hilbert $C_0(X)$-bimodule (with coinciding left and right $C_0(X)$-actions). 
The $\rC^*$-algebra $\sKH$ of compact right $C_0(X)$-module operators of $\tilde \sH$ is a $C_0(X)$-algebra,
whilst the unital $\rC^*$-algebra $\sBHm$ of right $C_0(X)$-module adjointable operators on $\tilde \sH$ is endowed
with an embedding $C_0(X) \hookrightarrow \sBHm$, which, unless $X$ is compact, is degenerate, that is,
\begin{equation}
\label{eq.sbh}
\sBH \ := \ C_0(X) \sBHm \ \subsetneq \ \sBHm
\end{equation}
(for example, if $\sH = X \times H$ then $\sKH = C_0(X,K(H))$, $\sBH = C_0(X,B(H))$ and $\sBHm = C_b(X,B(H))$,
the $\rC^*$-algebra of bounded continuous maps from $X$ to $B(H)$).
%
%Note that by construction $\sBH$ is a $C_0(X)$-algebra and $\sBHm \subseteq M \sBH$.
Following Kasparov (\cite{Kas88}), given a $C_0(X)$-algebra $\sB$ we call $C_0(X)$-\emph{representation} 
a $C_0(X)$-morphism
\[
\pi : \sB \to \sBH \subseteq \sBHm
\]
(when $X$ is compact we take $\sBHm$ coinciding with $\sBH$).

\

Let $(\cA,\jmath)_K$ be a net with canonical morphism
$\tau : (\cA,\jmath)_K \nrightarrow  \sA$
and 
$\pi : \sA \to \sBHm$ 
a $C_0(X)$-representation. 
Applying (\ref{eq.comp}), we obtain the heteromorphism
\[
\pi_! := \pi \circ \tau : (\cA,\jmath)_K \nrightarrow  \sBH
\ .
\]
We call {\em $C_0(X)$-representations} of $(\cA,\jmath)_K$ heteromorphisms with values in $\sBH$.
The previous results show that any $C_0(X)$-representation of $\sA$ induces 
a $C_0(X)$-representation of $(\cA,\jmath)_K$.

Let us now consider a representation 
\begin{equation}
\label{eq.pi}
\pi : (\cA,\jmath)_K \to (\cBH,\ad U)_K \ .
\end{equation}
Then by Ex.\ref{ex.lc} there is a bundle of Hilbert spaces $\sH \to X$
such that $\sBH$ is the $C_0(X)$-algebra of $(\cBH,\ad U)_K$, so we have the 
canonical morphism 
$\e' : (\cBH,\ad U)_K  \nrightarrow  \sBH$
which yields the $C_0(X)$-representation
\[
\pi_! := \e' \circ \pi : (\cA,\jmath)_K \nrightarrow  \sBH \ ,
\]
inducing, by universality, the $C_0(X)$-representation
\begin{equation}
\label{eq.b1}
\pi^\tau : \sA \to \sBH 
\ \ : \ \
\pi^\tau \circ \e = \pi_!
\ .
\end{equation}
So we may regard representations of the type (\ref{eq.pi}) as particular cases of $C_0(X)$-representations.

\begin{remark}
Let $(\cH,U)_K$ denote a Hilbert net bundle defining the $\rC^*$-net bundle $(\cBH,\ad U)_K$.
The net structure $\ad U$ clearly restricts to isomorphisms at the level of 
compact operators
\[
\ad U_{Y'Y} : K(\cH_Y) \to K(\cH_{Y'})
\ \ , \ \
Y \subseteq Y'
\ ,
\]
thus we have the $\rC^*$-net subbundle $(\cK \cH,\ad U)_K$ of $(\cBH,\ad U)_K$ to which corresponds,
by functoriality, the inclusion of $C_0(X)$-algebras $\sKH \subseteq \sBH$. This implies that
each $K(\cH_Y)$, $Y \in K$, can be regarded as a set of local sections of $\sKH$.
\end{remark}

\section{$K$-homology and index}
\label{s.index}

In this section we define $K$-homology cycles for a net of $\rC^*$-algebras, 
that we call {\em nets of Fredholm modules}, and show that these yield cycles in representable $K$-homology,
i.e. continuous families of Fredholm modules. 
As mentioned in the introduction, our start point is the use of Fredholm modules made by Longo  
to describe sectors of a net (\cite[\S 6]{Lon01}), an idea that has been developed in terms
of nets of spectral triples for applications to conformal field theory (\cite{CHKL10}).

\

Let $K$ be a poset.
The Hilbert net bundle $(\cH,U)_K$ is said to be {\em graded} whenever there
is a self-adjoint, unitary section $\Gamma = \{ \Gamma_o \in U \cH_o \}$,
\begin{equation}
\label{eq.z2}
\Gamma_{o'} U_{o'o} = U_{o'o} \Gamma_o
\ \ , \qquad o \leq o' 
\ .
\end{equation}
This clearly yields a direct sum decomposition
$(\cH,U)_K = (\cH^+,U^+)_K \oplus (\cH^-,U^-)_K$.
Note that (\ref{eq.z2}) is equivalent to saying that $\Gamma$ is a section of 
$(\cBH , \ad U)_K$ inducing a $\bZ_2$-grading, thus we can consider the 
Banach net bundles of even/odd operators
\[
(\cB^\pm \cH , \ad U)_K 
\ \ : \ \ 
\ad \Gamma_o(t_\pm) = \pm t_\pm
\ , \
\forall t_\pm \in B^\pm(\cH_o) 
\ , \
\forall o \in K
\ .
\]
To be concise, we denote the family of inner automorphisms associated to $\Gamma$ 
by $\gamma = \{ \gamma_o := \ad \Gamma_o \}$. 
Let $(\cA,\jmath)_K$ be a net of $\rC^*$-algebras endowed with a period 2 automorphism
$\beta : (\cA,\jmath)_K \to (\cA,\jmath)_K$
(in this case we say that $(\cA,\jmath)_K$ is \emph{graded}), and $\pi$
a representation over the graded Hilbert net bundle $(\cH,U;\Gamma)_K$.
We say that $\pi$ is {\em graded} whenever
\begin{equation}
\label{eq.z2pi}
\gamma_o \circ \pi_o = \pi_o \circ \beta_o
\ \ , \ \
\forall o \in K
\ .
\end{equation}
The following definitions take into account the fact that each $\cA_o$, $o \in K$,
is unital; their formulation in the non-unital case may be given with the obvious
modifications.

\begin{definition}
\label{def.fred}
Let $(\cA,\jmath)_K$ be a graded/ungraded net of $\rC^*$-algebras. 
An even/odd \textbf{net of Fredholm modules} over $(\cA,\jmath)_K$ or, to be concise, 
a \textbf{Fredholm $(\cA,\jmath)_K$-module}, is a triple $(\pi,U;F)$, where:
\textbf{(i)}   $\pi$ is a nondegenerate, graded/ungraded 
            representation on the Hilbert net bundle $(\cH,U)_K$,
            having a grading $\Gamma$ in the even case;
\textbf{(ii)}  $F = \{ F_o \}$ is a section of $( \cBH , \ad U )_K$ 
            (i.e. $\ad U_{o'o}(F_o) = F_{o'}$ for all $o \leq o'$)
            and, in the even case,
            $\gamma_o(F_o) = - F_o$, $\forall o \in K$.
            It fulfils the relations
            \begin{equation}
            \label{eq.khom}
            F_o=F_o^*
            \ \ {\mathrm{and}} \ \
            F_o^2-1_o
            \ , \
            [F_o,\pi_o(t)] 
            \ \in K(\cH_o)
            \ \ , \ \ 
            \forall o \in K
            \ , \
            t \in \cA_o
            \ .
            \end{equation}
\end{definition}

We denote the set of Fredholm $(\cA,\jmath)_K$-modules by
$\sF(\cA,\jmath)_K$.

\smallskip

We now pass to the topological case and assume that $K$ is a good base for 
a $\sigma$-compact metrizable space $X$. 
For notation and terminology on representable $K$-homology we refer the reader 
to the appendix \S \ref{KK}. 
In the next results we interpret Fredholm $(\cA,\jmath)_K$-modules
in terms of continuous families of Fredholm operators.
\begin{lemma}
\label{lem.BH}
Let $(\cH,U)_K$ be a Hilbert net bundle and $T$ a section of $(\cBH,\ad U)_K$. 
Then the operator $\wa T$ defined as in Prop.\ref{prop.sec} is in $\sBHm$.
\end{lemma}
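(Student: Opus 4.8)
The plan is to recognise $\wa T$ as a multiplier of $\sBH$ and then to pass to $\sBHm$ via the identification of adjointable module operators with the multiplier algebra of the compact ones. Since $T=\{T_o\}$ is a section of $(\cBH,\ad U)_K$ in the sense of Def.\ref{def.fred}(ii) — that is, $\ad U_{o'o}(T_o)=T_{o'}$ for $o\le o'$ — it is in particular a section in the sense of Prop.\ref{prop.sec} applied to the net $(\cBH,\ad U)_K$. That proposition gives at once that $\wa T=\{\e^x_Y(T_Y)\}_x$ is a well-defined multiplier of the $C_0(X)$-algebra associated to $(\cBH,\ad U)_K$, which by Ex.\ref{ex.lc} is $\sBH$; explicitly $\wa T\cdot f\,\widehat{\e}_Y(t)=f\,\widehat{\e}_Y(T_Y t)$, and symmetrically on the right. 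Applying the same construction to the section $T^*=\{T_o^*\}$ (again a section, since each $\ad U_{o'o}$ is a $*$-isomorphism) produces $\wa{T^*}\in M(\sBH)$, and fibrewise $(\wa T)^*=\wa{T^*}$; thus $\wa T\in M(\sBH)$.

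Next I would use the inclusion of $C_0(X)$-algebras $\sKH\subseteq\sBH$ induced by the net subbundle $(\cK\cH,\ad U)_K\subseteq(\cBH,\ad U)_K$ (see \S\ref{sec.rep}), and observe that $\sKH$ is an \emph{essential} ideal of $\sBH$: fibrewise this is just the fact that $K(\sH_x)$ is an essential ideal of $B(\sH_x)$, and it lifts to the embedding $\sBH\subset\prod_x\sBH_x$ because each evaluation is a surjective $*$-homomorphism carrying $\sKH$ onto $\sKH_x=K(\sH_x)$. Since $\sBH$ is in turn an essential ideal of $M(\sBH)$, the ideal $\sKH$ is essential in $M(\sBH)$ as well, so extension of multiplication yields a canonical embedding $M(\sBH)\hookrightarrow M(\sKH)$ restricting to the inclusion $\sBH\subseteq M(\sKH)$.

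Finally, by the standard Hilbert-module identity the right $C_0(X)$-module adjointable operators on $\tilde\sH$ are exactly the multipliers of the compact ones, $\sBHm=M(\sKH)$. Chaining the inclusions gives $\wa T\in M(\sBH)\subseteq M(\sKH)=\sBHm$, which is the assertion; and under these identifications $\wa T$ acts on $s\in\tilde\sH$ fibrewise by $(\wa Ts)(x)=\wa T_x\,s(x)$, in accordance with Prop.\ref{prop.sec}. I would regard the only real obstacle as a bookkeeping one: one must be sure that the $C_0(X)$-algebra produced by the functor of \S\ref{sec_LUA} from $(\cBH,\ad U)_K$, in which Prop.\ref{prop.sec} places $\wa T$, is literally the same concrete $C_0(X)$-algebra $\sBH=C_0(X)\sBHm$ of module operators — this is exactly what Ex.\ref{ex.lc} asserts — so that ``multiplier of $\sBH$'' and ``element of $\sBHm$'' are comparable notions and the fibrewise description of $\wa T$ transports correctly. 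Granting that identification the rest is routine; a more hands-on alternative would bypass multiplier algebras and verify directly that $s\mapsto\{x\mapsto\wa T_x\,s(x)\}$ preserves continuity and vanishing at infinity of sections, using a good cover over which $\sH$ is trivial and $(\cBH,\ad U)_K$ locally constant, but the route above is shorter and reuses Prop.\ref{prop.sec} verbatim.
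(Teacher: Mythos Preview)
Your argument is correct and reaches the same endpoint as the paper, namely the identification $\sBHm = M(\sKH)$, but you arrive there by a different route. The paper does not pass through $M(\sBH)$ at all: it simply observes that $\sBHm$ is the multiplier algebra of $\sKH$ and then checks \emph{directly} that $\wa T$ multiplies the generators $f\,\widehat{\e'}_Y(t)$ of $\sKH$ (with $t\in K(\cH_Y)$), via the same one-line computation $\wa T\cdot f\,\widehat{\e'}_Y(t)=f\,\widehat{\e'}_Y(T_Yt)\in\sKH$ that underlies Prop.~\ref{prop.sec}. In other words, the paper repeats the Prop.~\ref{prop.sec} calculation inside the compact-operator subbundle rather than citing it.

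Your version trades that repetition for an abstract step: you invoke Prop.~\ref{prop.sec} verbatim to land in $M(\sBH)$, and then use the essential-ideal chain $\sKH\trianglelefteq\sBH\trianglelefteq M(\sBH)$ to embed $M(\sBH)\hookrightarrow M(\sKH)=\sBHm$. This is perfectly sound (your fibrewise check that $\sKH$ is essential in $\sBH$ is the right one), and it has the virtue of being modular --- no computation is redone. The cost is the extra bookkeeping you yourself flag: one must know that Ex.~\ref{ex.lc} really identifies the functorial $C_0(X)$-algebra of $(\cBH,\ad U)_K$ with $\sBH$, and one must supply the essential-ideal argument, which the paper's direct computation sidesteps entirely. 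Your remark about $T^*$ is harmless but unnecessary: Prop.~\ref{prop.sec} already delivers a two-sided multiplier.
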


\begin{proof}
Let $\sH \to X$ denote the locally constant Hilbert bundle defined by $(\cH,U)_K$.
It is well-known that the multiplier algebra of $\sKH$ is $\sBHm$, 
so to prove the Lemma it suffices to verify that the vector field $\wa T$ defines a multiplier of $\sKH$.
To this end we consider generators $f \, \widehat{\e'}_Y(t)$ of $\sKH$, 
with $t \in K(\cH_Y)$, $f \in C_0(Y)$, $Y \in K$, and compute
\[
\wa T \cdot f \, \widehat{\e'}_Y(t) \ = \
\{ {\e'}^x_Y(T_Y) \cdot f(x) \, \widehat{\e'}_Y(t)_x  \}_x \ = \
f \, \widehat{\e'}_Y(T_Y t) \ ;
\]
since $T_Y t \in K(\cH_Y)$ we find $\wa T \cdot f \, \widehat{\e'}_Y(t) \in \sKH$,
and this proves $\wa T \in \sBHm$ as desired.
\end{proof}

\begin{theorem}
\label{thm.fred}
Let $(\cA,\jmath)_K$ be a net of $\rC^*$-algebras.
Then any Fredholm $(\cA,\jmath)_K$-module $(\pi,U;F)$ defines
a cycle in the representable $K$-homology $RK^0(\sA)$.
\end{theorem}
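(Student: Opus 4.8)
The plan is to assemble the data of the Fredholm $(\cA,\jmath)_K$-module into a single Kasparov cycle for the pair $(\sA, C_0(X))$, using the machinery already developed in \S\ref{sec_LUA} and \S\ref{sec.rep}. First I would fix the locally constant Hilbert bundle $\sH \to X$ determined by $(\cH,U)_K$ via the assignment (\ref{eq.netBan}), together with its Hilbert $C_0(X)$-module $\tilde\sH$ of sections vanishing at infinity, so that $\sKH$ and $\sBHm$ are available as in (\ref{eq.sbh}). The representation $\pi : (\cA,\jmath)_K \to (\cBH, \ad U)_K$ is, by Example \ref{ex.lc}, a morphism into a net bundle whose associated $C_0(X)$-algebra is $\sBH$; so by the universality of $\sA$ (Theorem \ref{thm.uni}, in the form (\ref{eq.b1})) it induces a $C_0(X)$-representation $\pi^\tau : \sA \to \sBH \subseteq \sBHm$. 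This is the representation part of the cycle. For the operator part, $F = \{F_o\}$ is by hypothesis a section of $(\cBH,\ad U)_K$ in the sense defined before Proposition \ref{prop.sec}, so Lemma \ref{lem.BH} gives a well-defined $\widehat F \in \sBHm$.

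The core of the argument is then to verify the three Kasparov conditions for the triple $(\tilde\sH, \pi^\tau, \widehat F)$ over $\sA$: namely $\widehat F = \widehat F^*$, and $\widehat F^2 - 1$, $[\widehat F, \pi^\tau(a)]$ lie in $\sKH$ for all $a \in \sA$. Self-adjointness of $\widehat F$ is immediate from $F_o = F_o^*$ fibrewise and the componentwise $*$-structure of the vector-field algebra. For the other two, I would argue by density: it suffices to check them on the generators $f\,\widehat\e_Y(t)$ of $\sA$, with $Y \in K$, $f \in C_0(Y)$, $t \in \cA_Y$, since these span a dense $*$-subalgebra and $\sKH$ is closed. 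Using Proposition \ref{prop.sec}'s description of how $\widehat F$ multiplies such a generator — $\widehat F \cdot f\,\widehat\e_Y(t) = f\,\widehat{\e'}_Y(F_Y \pi_Y(t))$ after applying $\pi^\tau$, and similarly on the other side — the commutator $[\widehat F, \pi^\tau(f\,\widehat\e_Y(t))]$ becomes (the image under the generator-construction for $\sKH$ of) $f \otimes [F_Y, \pi_Y(t)]$, which lies in $\sKH$ precisely because $[F_Y,\pi_Y(t)] \in K(\cH_Y)$ by (\ref{eq.khom}) and $K(\cH_Y)$ consists of local sections of $\sKH$ (the remark after Example \ref{ex.lc}). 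The same computation gives $(\widehat F^2 - 1)\cdot f\,\widehat\e_Y(t) = f\,\widehat{\e'}_Y((F_Y^2 - 1_Y)\pi_Y(t)) \in \sKH$ since $F_Y^2 - 1_Y \in K(\cH_Y)$. In the even case one checks that the grading $\Gamma$ induces, again by universality, a grading on $\sA$ (or rather that $\pi^\tau$ and $\widehat F$ are compatible with the $\bZ_2$-grading on $\tilde\sH$ coming from $\Gamma$), using $\gamma_o(F_o) = -F_o$ and (\ref{eq.z2pi}); this reduces to the same generator-level check.

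I expect the main obstacle to be the bookkeeping around \emph{non-degeneracy and the multiplier subtleties} of (\ref{eq.sbh}): $\pi^\tau$ lands in $\sBH = C_0(X)\sBHm$, which is a proper ideal of $\sBHm$ when $X$ is non-compact, whereas $\widehat F$ is a genuine multiplier ($\widehat F \in \sBHm$, typically not in $\sBH$), so the representable $K$-homology cycle is being defined over $\sA$ with coefficient algebra $C_0(X)$ and one must be careful that ``$[\widehat F,\pi^\tau(a)] \in \sKH$'' is the right relative statement — precisely the content of Kasparov's definition of $RK^0(\sA) = KK(\sA, C_0(X))$ recalled in Appendix \S\ref{KK}. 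Once the generator computations above are in hand, matching them to that definition is essentially a translation, but it is the step that requires the most care. A secondary, more routine point is checking that different choices (the base element $a$ for $\sH$, the plateau functions implicit in $\tau$) give the same cycle up to the equivalence relation on $RK^0(\sA)$; this follows from the uniqueness clause in Theorem \ref{thm.uni} and homotopy invariance of $RK^0$, and I would dispatch it briefly at the end.
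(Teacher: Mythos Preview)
Your proposal is correct and follows essentially the same route as the paper: build $\pi^\tau$ from $\pi$ via universality (\ref{eq.b1}), lift $F$ and $\Gamma$ to $\widehat F, \widehat\Gamma \in \sBHm$ via Lemma \ref{lem.BH}, and verify the Kasparov relations (\ref{eq.KK.khom}) on generators using Corollary \ref{cor.b3} and the hypothesis (\ref{eq.khom}).

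The one point to tighten is your density step. The elements $f\,\widehat\e_Y(t)$ do \emph{not} linearly span a dense subspace of $\sA$; they generate the dense $*$-algebra $\cA_*$, whose typical element is a sum of monomials $f_1\widehat\e_{Y_1}(t_1)\cdots f_n\widehat\e_{Y_n}(t_n)$. So checking $[\widehat F,\pi^\tau(\,\cdot\,)] \in \sKH$ on single generators is not by itself enough; one still has to pass to products. The paper does this explicitly by induction on the monomial length, using the Leibniz rule $[\widehat F,\pi^\tau(ab)] = [\widehat F,\pi^\tau(a)]\pi^\tau(b) + \pi^\tau(a)[\widehat F,\pi^\tau(b)]$ together with the fact that $\sKH$ is an ideal in $\sBHm$ (and, for the base case, the trick of writing $f_1 = f_1^{1/2} f_1^{1/2}$ so that both $\widehat F$ and $\pi^\tau(\,\cdot\,)$ acquire a factor supported in $Y_1$). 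The same ideal argument handles $(\widehat F^2-1)\pi^\tau(\,\cdot\,)$ on products trivially. Once you insert this routine step, your argument and the paper's coincide.
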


\begin{proof}
We focus on the even case (the odd one is analogous).
We start observing that, $(\cA,\jmath)_K \mapsto \sA$ being a functor, 
the $C_0(X)$-algebra $\sA$ is graded by $\beta^\tau \in \textbf{aut}\sA$.
By (\ref{eq.b1}) there is a locally constant Hilbert bundle $\sH$ defined by 
$(\cH,U)_K$ and a $C_0(X)$-representation
\[
\pi^\tau : \sA \to \sBH \ .
\]
By Lemma \ref{lem.BH}, $\Gamma$ and $F$ yield operators $\wa \Gamma , \wa F \in \sBHm$,
with $\wa \Gamma$ unitary and self-adjoint.
Now, $\sA$ is generated as a closed vector space by monomials of the type
\[
T := f_1 \widehat{\e}_{Y_1}(t_1) \cdots f_n \widehat{\e}_{Y_n}(t_n)
\ \ , \ \
Y_k \in K
\ , \
f_k \in C_0(Y_k)
\ , \
t_k \in \cA_{Y_k}
\ ;
\]
note that, by linearity, we may assume that $f_k \geq 0$ for all $k$.
We now verify the relations (\ref{eq.KK.khom}), starting from the commutativity up
to compact operators with elements of $\pi^\tau(\sA)$; to this end, by linearity
and density it suffices to verify on elements of the type $T$, and we proceed
inductively on the length $n$. 
As a first step we denote the generators of $\sBH$ by 
$f \, \widehat{\e'}_Y(T)$, $f \in C_0(Y)$, $T \in \cBH_Y$,
and compute (assuming $f_1 \geq 0$)
\[
\begin{array}{lcl}
[ \wa F  \ , \ \pi^\tau(f_1 \widehat{\e}_{Y_1}(t_1)) ]  & = &
[ f_1^{1/2} \wa F  \ , \ \pi^\tau(f_1^{1/2} \widehat{\e}_{Y_1}(t_1)) ]  = \\ & = &
[ f_1^{1/2} \widehat{\e'}_{Y_1}(F_{Y_1})  \ , \ \pi^\tau(f_1^{1/2} \widehat{\e}_{Y_1}(t_1)) ]  
\stackrel{ Cor.\ref{cor.b3} }{=} \\ & = &
%
%cor.b3
%
%
[ f_1^{1/2} \widehat{\e'}_{Y_1}(F_{Y_1})  \ , \ f_1^{1/2} \widehat{\e'}_{Y_1}(\pi_{Y_1}(t_1)) ]  
= \\ & = &
f_1 \, \widehat{\e'}_{Y_1} \left( [ F_{Y_1} , \pi_{Y_1}(t_1) ]  \right)
\ \stackrel{(\ref{eq.khom})}{\in} \ \sKH \ .
\end{array}
\]
We now assume that $T$ is a monomial of length $n > 1$, define
$T_{\geq i} := f_i \widehat{\e}_{Y_i}(t_i) \cdots f_n \widehat{\e}_{Y_n}(t_n)$, $i \geq 2$,
and compute
\[
\begin{array}{l}
\left[ \wa F  , \pi^\tau(T) \right]  = \\ 
\left[ \wa F \ , \ \pi^\tau(f_1 \widehat{\e}_{Y_1}(t_1)) \cdot \pi^\tau(T_{\geq 2}) \right] = \\
\left[ \wa F \ , \ \pi^\tau(f_1 \widehat{\e}_{Y_1}(t_1)) \right] \, \pi^\tau(T_{\geq 2})  \ +
\pi^\tau(f_1 \widehat{\e}_{Y_1}(t_1)) \, \left[ \wa F \ , \  \pi^\tau(T_{\geq 2}) \right] \ .
\end{array}
\]
By induction the two summands of the previous expression belong to $\sKH$, and
this proves that $[ \wa F \ , \ \pi^\tau (T) ] \in \sKH$.
By linearity and density, we conclude that
$[ \wa F , \pi^\tau (T) ] \in \sKH$, $\forall T \in \sA$.
The other relations in (\ref{eq.KK.khom}) defining a Kasparov module are checked in the same way 
using (\ref{eq.khom}), and the theorem is proved.
\end{proof}

The previous result yields the desired geometric interpretation of objects, 
nets of Fredholm modules, that a priori are defined in terms of the abstract poset $K$.
From this point of view we consider Theorem \ref{thm.fred} satisfactory at the conceptual level, 
nevertheless we would like to point out that aspects related to the holonomy need a further investigation.
To illustrate this point we consider the case $X = S^1$.
By the Kuiper theorem we have that the Hilbert bundle $\sH$ of the previous theorem is trivial,
so we may regard $\wa F$ as a continuous map 
\[
\wa F : S^1 \to F(H) \ ,
\]
where $F(H) \subset B(H)$ is the space of Fredholm operators endowed with the norm topology.
By the Atiyah-Janich theorem $\wa F$ yields an element ${\mathrm{index}}(\wa F)$ of the $K$-theory $K^0(S^1)$, 
that is exactly the product (\ref{index}) of 
$[\sA]$, the $K$-class of the free, rank one right Hilbert $\sA$-module,  
by the Kasparov module $(\pi^\tau,\wa F)$.
But, as well-known, $K^0(S^1) = \bZ$, so ${\mathrm{index}}(\wa F) \in \bZ$
yields exactly the kind of invariant that we would obtain taking $\pi$ as a Hilbert space representation
and $\wa F$ as a constant map.
The reason of this fact relies in Ex.\ref{ex.rep}(i).
When we compute ${\mathrm{index}}(\wa F)$ we \emph{forget} the essential information of holonomy,
without which any locally constant bundle on $S^1$ appears simply as a trivial bundle.
So we need a tool that takes account of such a structure,
that is, the equivariant $K$-homology of the holonomy dynamical system defined
by $(\cA,\jmath)_K$ (see \cite[\S 3.3]{RV12}), that we study in \cite{RV2}.

\section{Applications and examples}
\label{sec.ex}

\paragraph{$\rC^*$-covers and nets.} 
Let $X$ be a compact Hausdorff space and $K$ a good base of $X$. 
A \emph{$\rC^*$-cover}, written $(Y,A)$, is given by a family of pairs 
$\{ (Y_i,A_i) \}_i$,
where:
\begin{itemize}
\item $Y = \{ Y_i \subset X \}$ is a (not open, in general) cover of $X$ such that $Y_i \cap Y_j = \emptyset$, $\forall i \neq j$;
\item $A$ is a $\rC^*$-algebra generated by the family of $\rC^*$-subalgebras $\{ A_i \subset A \}$.
\end{itemize}
The net of $\rC^*$-algebras $(\cA,\jmath)_K$ defined by $(Y,A)$ is given by
\[
\cA_o \ := \ \rC^* \{ A_i \, : \, Y_i \cap o \neq \emptyset \} \ \ , \ \  o \in K \ ,
\]
and the obvious inclusion morphisms $\jmath_{o'o} : \cA_o \to \cA_{o'}$, $o \subseteq o'$.

\begin{example}[Subtrivial $C(X)$-algebras]
A $C(X)$-algebra $\sB$ is said to be subtrivial whenever there is a 
$\rC^*$-algebra $B$ with a $C(X)$-monomorphism $i : \sB \to C(X) \otimes B$;
in this case we identify $\sB$ with its image through $i$, so that any fibre 
$\sB_x$ is a $\rC^*$-subalgebra of $B$. 
Let $\{ A_i \subseteq B \}$ denote the family of distinguished fibres of $\sB$.
Then we have the $\rC^*$-cover $(Y,A)$, where $A := \rC^* \{ A_i \} \subseteq B$ and
$Y_i := \{ x \in X  :  \sA_x = A_i \}$.
The $\rC^*$-cover $(Y,A)$ defines the net $(\cA,\jmath)_K$ and in accord with Theorem \ref{thm.b2},
the $C(X)$-algebra $\sA$; we have a $C(X)$-monomorphism
$\sB \to \sA$,
as, by construction, $\sB_x \subseteq \sA_x$, $\forall x \in X$.
The spirit is that any fibre $\sB_x$ is enlarged by those $\rC^*$-algebras $\sB_y$ where $x,y \in o$ for some $o \in K$,
thus $\sA$ has the propensity to "fill the gaps" of $\sB$. For example, we may take $X=[0,1]$ and
$\sB := \{ f \in C(X,A) : f(x) = 0 , \forall x \in [1/3,2/3] \}$
and easily verify that $\sA = C(X) \otimes A$.
\end{example}

\smallskip

In the following lines we discuss a class of examples that makes use, as an input, of
an inclusion $A_1 \subset A_2 = A$ of unital $\rC^*$-algebras and a *-automorphism $\alpha \in {\bf aut}A$.
We provide an interpretation of the above objects in terms of nets and $C_0(X)$-algebras,
where $X := S^1$ is endowed with the base $K$ of open intervals with proper closure in $X$. 
As a first step we consider the cover of $S^1$ (in polar coordinates)
\[
Y_1 \, = \, \{ \theta \in \left[ 0 , \pi \right]   \} 
\ \ , \ \
Y_2 \, = \, \{ \theta \in \left( \pi , 2\pi \right)   \} \ ;
\]
it yields the $\rC^*$-cover $(Y,A)$, that we may regard as the $\rC^*$-cover defined by the subtrivial $C_0(X)$-algebra
$\sB \, := \, \{ f \in C(S^1,A) \, : \, f(Y_1) \subseteq A_1 \}$.
The net $(\cA,\jmath)_K$ defined by $(Y,A)$ has fibres
\[
\begin{array}{ll}
\cA_o = A_i \ \ , \ \ {\mathrm{if}} \ \ o \subset Y_i \ {\mathrm{for \ a \ given}} \ i=1,2 \ , \\
\cA_o = A \ \ , \ \ {\mathrm{if}} \ \ o \cap Y_1 \neq \emptyset \ {\mathrm{and}} \  o \cap Y_2 \neq \emptyset \ ;
\end{array}
\]
moreover $\sA \simeq C(S^1,A)$, as can be verified by observing that
$\cA_x = A$, $\forall x \in S^1$.

Let us now consider a $K$-homology cycle $(\pi_\bullet,F_\bullet) \in RK^0(A)$, 
equivariant in the sense that $(\pi_\bullet,U_\bullet)$ is a covariant representation of $(A,\alpha)$ on the Hilbert space $H_\bullet$
and $[F_\bullet,U_\bullet] = 0$. Since $A$ is unital and ungraded, $(\pi_\bullet,U_\bullet)$ splits in accord to a direct sum decomposition
$H_\bullet = H_\bullet^0 \oplus H_\bullet^1$ 
such that
\[
F_\bullet \, = \,
\left(
\begin{array}{cc}
0 & T_\bullet^* \\
T_\bullet & 0
\end{array}
\right)
\ \ , \ \ 
T_\bullet : H_\bullet^0 \to H_\bullet^1
\ .
\]
We define the Hilbert net bundle $(\cH,U)_K$, where
\[
\cH_o = H_\bullet \ \ , \ \ 
U_{o'o} =
\left\{
\begin{array}{ll}
U_\bullet \ \ , \ \ {\mathrm{if}} \ \ \pi \in o' \ {\mathrm{and}} \ o \subset Y_1 \ , \\
1 \ \ , \ \  {\mathrm{otherwise}} \ ,
\end{array}
\right.  
\ \ , \ \ 
\forall o \subseteq o'
\]
(it is easy to verify that $U_{o''o} = U_{o''o'} U_{o'o}$, $\forall o \subseteq o' \subseteq o''$)
and the representations 
\[
\pi_o \,:= \,
\left\{
\begin{array}{ll}
\pi_\bullet \circ \alpha |_{\cA_o} \ \ , \ \ \forall o \in K \, : \, \pi \in o \ ,
\\
\pi_\bullet |_{\cA_o} \ \ , \ \  {\mathrm{otherwise}} \ .
\end{array}
\right. 
\]
It turns out that
\[
\pi_{o'} |_{\cA_o} = \ad U_{o'o} \circ \pi_o \ \ , \ \ \forall o \subseteq o' \ ,
\]
for example if $o \subset Y_1$ and $o \subset o'$, $\pi \in o'$, then 
$\pi_o = \pi_\bullet|_{A_1}$
and
$\pi_{o'} |_{\cA_o} =$ 
$\pi_\bullet \circ \alpha |_{A_1} =$ 
$\ad U \circ \pi_\bullet |_{A_1} =$ 
$\ad U_{o'o} \circ \pi_o$.
Thus
$\pi : (\cA,\jmath)_K \to (\cBH,\ad U)_K$
is a representation, that clearly has the grading inherited by $\pi_\bullet$.
Defining
$F_o := F_\bullet$, $T_o := T_\bullet$, $\forall o \in K$,
we obtain the Fredholm $(\cA,\jmath)_K$-module $(\pi,U;F)$.
Since $\ker T_\bullet$, $\ker T_\bullet^*$ are stable under the $U_\bullet$-action, 
we have finite-dimensional Hilbert net bundles $(E,v)_K$, $(E',v')_K$, where
$E_o := \ker T_o$, $v_{o'o} := U_{o'o}|_{E_o}$,
$E'_o := \ker T_o^*$, $v'_{o'o} := U_{o'o}|_{E'_o}$,
$\forall o \subseteq o'$.
These define locally constant Hilbert bundles 
$\sE \, , \, \sE' \to S^1$
respectively, and the index of the corresponding cycle $(\pi^\tau,\hat{F})$ in $RK^0(\sA)$ is given by 
\[
{\mathrm{index}}(\pi^\tau,\hat{F}) \, = \, [\sE] - [\sE'] \in K^0(S^1) \simeq \bZ \ .
\]
In general $(E,v)_K$, $(E',v')_K$ are not trivial net bundles
nevertheless, as noted at the end of the previous section, $\sE , \sE'$ are always trivial.

\paragraph{Quantum field theory and globally hyperbolic space-times.}

In the following paragraphs we assume that 
$X$ is a globally hyperbolic manifold with at least two spatial dimensions. 
We consider the base $K$ of \emph{diamonds} generating the topology of $X$, see \cite[Eq.3.2]{BR09} and,
in particular, when $X \simeq \bR^4$ is the Minkowski spacetime, the base $K$ of doublecones (\cite[\S I]{DHR71}). 
These are good in the sense of the present paper, thus the fundamental groups of $K$ and $X$ are isomorphic.

In the following paragraphs we show that some invariants
(in particular, the statistical dimension)
defined by nets of $\rC^*$-algebras
generated by quantum fields can be computed in terms of the index of a suitable Fredholm operator.
This idea has been suggested by Longo in \cite[\S 6]{Lon01} by using the concept of \emph{supercharge},
nevertheless we prefer to follow a different route, for the main reason that the known supercharges
correspond to "infinite-dimensional" Fredholm modules such as nets of spectral triples (\cite{CHKL10}), 
thus they cannot describe objects such as sectors with finite statistical dimension.

\paragraph{Shift operators and field nets.}
Let $(\cF,\jmath)_K$ be a net of von Neumann algebras defined on the separable Hilbert space $H$, that is,
any $\cF_o$ is a von Neumann subalgebra of $B(H)$ and $\jmath_{o'o}$, $o \subseteq o'$, is the inclusion.
We assume that the net acts irreducibly on $H$, i.e.\ the global algebra $\vec{\cF} := \vee_o \cF_o$ is irreducible\footnote{In the present section, the symbol $\vee$ stands for the von Neumann algebra
           generated by the given family of algebras.}, and that a strongly compact gauge group $G \subseteq U(H)$ acts on $(\cF,\jmath)_K$ by net automorphisms,
that is 
\[
\alpha_g(\cF_o) \ := \ g \cF_o g^* \ = \cF_o \ \ , \ \ g \in G \ , \ o \in K \ . 
\]
The construction of a net of the above type can be achieved by standard arguments.
One may take $X$ as a 4-dimensional globally hyperbolic spacetime and take, for example, the free Dirac field defined by \cite{Dim82}.
%
%
%, obtained as follows:
%consider the spinor bundle $DX \to X$ and its restriction $D\Si$ to a Cauchy hypersurface 
%$\Si \subset X$; then define $H$ as the fermionic Fock space over the space of square summable sections 
%$L^2(D\Si)$ and the
%fermionic annihilation operator $a(\varsigma) \in B(H)$, $\varsigma \in L^2(D\Si)$;
%as a final step set
%%
%\[
%\psi(s) \, := \, a(Ts) \in B(H) \ \ , \ \ s \in C^\infty_c(DX) \ ,
%\]
%%
%where $T : C^\infty_c(DX) \to L^2(D\Si)$ is the operator defined by composing the fundamental
%solution of the Dirac equation with the restriction to $D\Si$. 
%Then $\psi$ fulfils the canonical anticommutation relations and carries the $\bZ_2$-action
%%
%\[
%\alpha_z(\psi(s)) \, := \, z\psi(s) \, = \, \hat{z} \psi(s) \hat{z}^* 
%\ \ , \ \ 
%z = \pm 1 \ , \ s \in C^\infty_c(DX) \ ,
%\]
%%
%where $\hat{z} \in U(H)$ is the second quantization of the multiplication by $z$. 
%Our von Neumann algebras are then defined by
%%
%\[
%\cF_o \ := \ \{ \psi(s) \, : \, \supp(s) \subset o \}'' \ \ , \ \ o \in K \ .
%\]
%
%
Returning to our general construction, the Hilbert space $H$ decomposes as follows  
\[
H \ = \ \bigoplus_{ \si \in {\mathrm{Irr}}G } H_\si \otimes L_\si \ ,
\]
where $H_\si$ is the multiplicity space and $L_\si$ is a finite-dimensional Hilbert space carrying
the irreducible representation $\si$, see \cite[\S 3.5]{GLRV}. 
We denote the spectral projections by 
$E_\si \in B(H)$, $\si \in {\mathrm{Irr}} \, G$; 
by construction, $E_\si H = H_\si \otimes L_\si$.
Spectral projections $E_\si$ define irreducible representations $\pi_\si$ 
(called \emph{topologically trivial superselection sectors}) of the fixed-point net
$(\cA,\jmath)_K$, $\cA_o := \cF_o^G$, $o \in K$: 
\[
\pi_\si : (\cA,\jmath)_K \to B(H_\si) 
\ : \ 
\pi_\si(t) \otimes 1_\si \ = \ E_\si t E_\si
\ \ , \ \ 
t \in \vec{\cA} := \vee_o \cA_o
\ , \ 
\si \in {\mathrm{Irr}} \, G \ ,
\]
where $1_\si \in B(L_\si)$ is the identity.
Let now us consider a vector $\omega \in H_\si$, $\| \omega \| = 1$, with the spirit that $\omega$ is a reference state of $(\cA,\jmath)_K$ inducing $\pi_\si$ as the associated GNS representation.
Then we may pick an ordering on the base of $H_\si$ such that $\omega$ is the first element of the base,
and define the corresponding shift operator $S_\omega \in B(H_\si)$, $S_\omega^* S_\omega = 1$, 
such that $P_\omega := S_\omega S_\omega^*$ is the projection onto $H_\si \ominus \bC \omega$.

\smallskip

We now perform a twist of the representations $\pi_\si$, obtaining superselection sectors affected by the topology of $X$ as in \cite{BR09}.
For simplicity we assume that $\pi_1(X)$ is abelian, a condition fulfilled in the interesting cases of 
de Sitter spacetimes ($X \simeq S^n \times \bR$, $n \in \bN$) and anti-de Sitter spacetimes ($X \simeq S^1 \times \bR^n$, $n \in \bN$).
We denote by $\pi_1(X)^*$ the character space of $\pi_1(X)$ and consider the set of pairs
\[
\varrho := (\si,\chi) \in {\mathrm{Irr}} \, G \times \pi_1(X)^* \ .
\]
By \cite[\S 5.1]{RRV12} any $\chi \in \pi_1(X)^*$ defines a rank 1 Hilbert net bundle $(\cL^\chi,Z^\chi)_K$,
thus any $\varrho \in {\mathrm{Irr}} \, G \times \pi_1(X)^*$ defines the Hilbert net bundle $(\cH^\varrho,U^\varrho)_K$, where
\[
\cH^\varrho_o := H_\si \otimes \cL^\chi_o \otimes L^\chi_\si
\ \ , \ \ 
U^\varrho_{o'o} := id(H_\si) \otimes Z^\chi_{o'o} \otimes 1_\si
\ \ , \ \
o \subseteq o'
\ ,
\]
and the representation
\[
\pi^\varrho : (\cA,\jmath)_K \to (\cBH^\varrho,\ad U^\varrho)_K
\ \ , \ \ 
\pi_o(t) := \pi_\si(t) \otimes id(\cL^\chi_o) \otimes 1_\si
\ , \
t \in \cA_o
\ , \ 
o \in K
\ .
\]
We denote the associated graded representation by $(\hat{\pi}^\varrho,\hat{U}^\varrho)$, 
where 
$\hat{\cH}^\varrho := \cH^\varrho \oplus \cH^\varrho$,
$\hat{U}^\varrho := U^\varrho \oplus U^\varrho$,
$\hat{\pi}^\varrho := \pi^\varrho \oplus \pi^\varrho$.
Let now us consider the $\rC^*$-algebras
$\cA^{\si,\omega}_o := \{ t \in \cA_o \ : \ [ \pi_\si(t) , S_\omega ] \in K(H_\si) \}$, $o \in K$,
that contain at least the multiples of $1 \in \cap_o \cA_o$.
Then we have the net $(\cA^{\si,\omega},\jmath)_K$ with associated $C_0(X)$-algebra $\sA^{\si,\omega}$, 
and the Fredholm $(\cA^{\si,\omega},\jmath)_K$-module $(\hat{\pi}^\varrho,\hat{U}^\varrho;F^\omega)$ where
\[
F^\omega_o \, := \,
\left(
\begin{array}{cc}
0 & S_\omega^* \otimes id(\cL_o) \otimes 1_\si \\
S_\omega \otimes id(\cL_o) \otimes 1_\si & 0
\end{array}
\right)
\ \ , \ \ 
o \in K
\ .
\]
Note that ${\mathrm{index}}F^\omega_o = \ker (S_\omega^* \otimes id(\cL_o) \otimes 1_\si)$ can be identified with $L_\si$ 
and has the structure of a $G$-module, thus ${\mathrm{index}}F^\omega_o$ can be regarded as a $G$-index.
It is clear that the $G$-module structure is preserved by the adjoint action $\ad \hat{U}^\varrho_{o'o}$, $o \subseteq o'$,
and the same holds when passing to the continuous family of Fredholm operators associated with $F^\omega$ in the sense of Theorem \ref{thm.fred}.

\smallskip

We conclude that for any superselection sector labelled by 
$\varrho = (\si,\chi) \in {\mathrm{Irr}} \, G \times \pi_1(X)^*$ 
there is a net of Fredholm modules $(\hat{\pi}^\varrho,\hat{U}^\varrho;F^\omega)$ carrying the additional structure given by the $G$-action.
We assign to $(\hat{\pi}^\varrho,\hat{U}^\varrho;F^\omega)$ the corresponding cycle 
$( \hat{\pi}^{\varrho,\tau},\hat{F}^\omega) \in RK^0_G(\sA^{\si,\omega})$ 
and consider the index
\[
{\mathrm{index}} \, (\hat{F}^\omega) \in RK^0_G(X) \ .
\]
Let $R(G)$ denote the representation ring of $G$ with elements classes $[\si]$, $\si \in {\mathrm{Rep}} \, G$;
applying to the above index the morphisms 
$* : RK^0_G(X) \to R(G)$, $f : R(G) \to \bZ$ 
given by the evaluation over a fixed $x \in X$ and the map forgetting the $G$-module structure respectively,
we obtain the indices
\[
{\mathrm{index}}_*(\hat{F}^\omega) = [\si] \in R(G) 
\ \ , \ \ 
{\mathrm{index}}_f(\hat{F}^\omega) = {\mathrm{dim}}L_\si \in \bZ \ .
\]
The above indices are independent of $\omega \in H_\si$;
in particular, ${\mathrm{index}}_f(\hat{F}^\omega)$ is by definition the statistical dimension of $\pi^\varrho$.

\paragraph{The net bundle of a superselection sector.}
Let $(\cA,\jmath)_K$ be a net of von Neumann algebras in the Hilbert space $H_0$,
where any $\jmath_{o'o} : \cA_o \to \cA_{o'}$, $o \subseteq o'$, is the inclusion.
Differently from the previous paragraph, now the idea is that elements of $\vec{\cA} := \vee_o \cA_o$ are operators interpreted as quantum \emph{observables}
acting on the reference separable Hilbert space $H_0$.
In this paragraph we assume that $X$ is the Minkowski spacetime, thus $H_0$ is the \emph{vacuum Hilbert space}.
%
%Examples of such a structure are given by the local von Neumann algebras generated by a bosonic field,
%or, as an alternative, one may take $H_0$ as the subspace of $G$-invariant vectors of the Hilbert space $H$ of the previous paragraph and 
%the net obtained by restricting operators of $\cF_o^G$, $o \in K$, to $H_0$.

\smallskip

Now, a superselection sector of $(\cA,\jmath)_K$ is labelled by a unitary family (\emph{cocycle})
\[
u_{o'o} \in U(\cA_{o'}) \ \ , \ \ o \subseteq o' \in K
\ \ {\mathrm{such \ that}} \ \ 
u_{o''o} = u_{o''o'} u_{o'o} \ \ , \ \  o \subseteq o' \subseteq o'' \ ,
\]
that we denote by $u$, see \cite[\S 4]{BR09} or \cite[\S 3.4.7]{Rob90}
{\footnote{
In the above-cited reference $u$ is defined in a different way as a unitary family indexed by elements of the first simplicial set $\Si_1(K)$.
Here, to be concise, we give a definition in terms of ordered pairs $o \subseteq o' \in K$;
the fact that the two definitions agree is proved in \cite[Prop.4.4]{RRV09}.}.
Any cocycle $u$ defines a family of *-endomorphisms 
$\varrho = \{ \varrho_o \in {\bf end}\vec{A} \}$, $\vec{A} := \vee_o \cA_o$,
(called the \emph{charge}), such that 
\begin{equation}
\label{eq.ex0}
\varrho_{o'} \, = \, \ad u_{o'o} \circ \varrho_o
\ \ , \qquad  o \subseteq o'
\ ,
\end{equation}
see \cite[\S 3.4.7]{Rob90}. We consider the intertwiner spaces
\begin{equation}
\label{eq.ex1}
(\varrho_o^r,\varrho_o^s) 
\, := \, 
\{ t \in \vec{A} \, : \, t \varrho_o^r(T) = \varrho_o^s(T)t \, , \, \forall T \in \vec{A} \}
\ \ , \ \ 
r,s \in \bN \ , \ o \in K \ ,
\end{equation}
where by definition $\varrho_o^0 := id_{\vec{A}}$.
The spaces (\ref{eq.ex1}) are sets of arrows of a tensor *-category $\cT^\varrho_o$ with objects $\varrho_o^r$, $r \in \bN$, 
see \cite[Ex.1.1]{DR89}. Moreover, (\ref{eq.ex0}) induces tensor *-isomorphisms
\[
\hat{u}_{o'o} : \cT^\varrho_o \to \cT^\varrho_{o'} 
\ : \
\left\{
\begin{array}{ll}
\hat{u}_{o'o}(\varrho_o^r) \, := \, \varrho_{o'}^r \ , \ r \in \bN \ , \\
\hat{u}_{o'o}(t) \, := \, u^{(s)}_{o'o} \, t \, (u^{(r)}_{o'o})^* \ , \ t \in (\varrho_o^r,\varrho_o^s) \ ,
\end{array}
\right.
\]
where $u^{(r)}_{o'o} := \varrho_{o'}^{r-1}(u_{o'o}) \cdots \varrho_{o'}(u_{o'o}) u_{o'o}$, $\forall r \in \bN$.
Consider the family of \emph{Doplicher-Roberts $\rC^*$-algebras}: 
$\cO^\varrho_o :=$ $\rC^* \{ (\varrho^r,\varrho^s) \}_{r,s}$, $o \in K$ (see \cite[\S 4]{DR89}).
Then $\cO^\varrho_o \subseteq \cA_o$, $\forall o \in K$, see \cite[\S IV]{DHR71}.
Moreover, applying \cite[Theorem 5.1]{DR89}, the isomorphisms $\hat{u}_{o'o}$ induce *-isomorphisms
\[
\hat{u}_{o'o} : \cO^\varrho_o \to \cO^\varrho_{o'} \ \ , \ \ o \subseteq o' \ ,
\]
fulfilling the net relations (note the abuse of the notation $\hat{u}_{o'o}$).
Thus we have the $\rC^*$-net bundle $(\cO^\varrho,\hat{u})_K$, that is trivial because $\pi_1(X)$ is trivial.

We pick a fibre $a \in K$.
When $\varrho$ is special in the sense of \cite[p.66]{DR90} and $u$ has statistical dimension 
$d$, there is an isomorphism $\cO^\varrho_a \simeq \cO_{\si(G)}$, \cite[Theorem 4.17]{DR89}, where 
$G$ is the compact group of global gauge symmetries of $(\cA,\jmath)_K$, 
$\si : G \to {\mathbb{SU}}(d)$ is a unitary representation inducing a *-automorphic action of $G$ on the Cuntz algebra $\cO_d$
with fixed-point algebra $\cO_{\si(G)}$.
Thus, $\cO^\varrho_a$ is a unital, simple (\cite[Theorem 3.1]{DR87}), separable and nuclear (\cite{DLRZ02}) $\rC^*$-algebra.

By triviality, representations of $(\cO^\varrho,\hat{u})_K$ correspond to Hilbert space representations of $\cO^\varrho_a$
and odd Fredholm $(\cO^\varrho,\hat{u})_K$-modules correspond to odd $K$-homology cycles for $\cO^\varrho_a$.
The $K$-homology
$RK^1(\cO^\varrho_a) \simeq  RK^1(\cO_{\si(G)})$
can be computed by using the methods in \cite{PR96,KP97}, anyway in the following lines we show that 
odd Fredholm $(\cO^\varrho,\hat{u})_K$-modules can be constructed using only the representation 
\[
\pi : \cO^\varrho_a \to B(H_0) \ \ , \ \ \pi := \varrho_a |_{ \cO^\varrho_a } \ ,
\]
that is, the restriction to $\cO^\varrho_a$ of the superselection sector defined by $u$. 
To this end, the crucial remark is that the nets used in quantum field theory verify the Borchers property \cite[p.61]{DR90}: 
given $a\in K$ and  a projection $E\in\cA_a$, for any $a'\in K$ containing the closure of $a$  
there is an isometry $V\in\cA_{a'}$ such that $V^*V=1$ and $VV^* = E$, thus $E$ is necessarily infinite. 
On these grounds, we note that:
\begin{itemize}
\item since $\cO^\varrho_a$ is a unital subalgebra of $\cA_a$ and $\varrho_a$ is unital, $\pi$ is non-degenerate;
\item by the Borchers property, the $\rC^*$-algebra $\cO^\varrho_a$ has no nontrivial compact operators. 
\end{itemize}
The above properties say that $\pi$ \emph{ample} in the sense of \cite[Chap.5]{HR}.
Furthermore, since $\cO^\varrho_a$ is unital, separable and nuclear, by \cite[Prop.5.1.6, Theorem 8.4.3]{HR} we obtain the isomorphism 
$RK^1(\cO^\varrho_a) \simeq K_0(\cD \cO^\varrho_a)$, 
where 
\[
\cD\cO^\varrho_a 
\, := \, 
\{ T \in B(H_0) \, : \, [ \varrho_a(t) , T ] \in K(H_0) \, , \, \forall t \in \cO^\varrho_a  \} \ .
\]
This shows that $RK^1(\cO^\varrho_a)$ is determined by $\pi$ as claimed.

\smallskip

\indent The case of an arbitrary globally hyperbolic spacetime $X$ deserves a further study that we postpone to a future work. 
In general, the charges $\varrho_o$, $o \in K$, cannot be defined over the global algebra $\vec{A}$ (see \cite[\S 5.1]{BR09}),
and the net $(\cO^\varrho,\hat{u})_K$ is not trivial, with the consequence that the r\^ole of $\cD\cO^\varrho_a$ is played by a presheaf.

\appendix

\section{Basics of representable $KK$-theory}
\label{KK}

For reader convenience in this section we recall some notions of representable $KK$-theory 
(see \cite[\S 2]{Kas88} for details). 
Let $\sA$ be a $C_0(X)$-algebra.
A grading on $\sA$ is given by a $C_0(X)$-automorphism $\gamma$ of $\sA$ with period $2$;
a $C_0(X)$-morphism between graded $C_0(X)$-algebras is said to be graded whenever 
it intertwines the relative automorphisms.
Given the graded $C_0(X)$-algebra $(\sB,\gamma')$, a Hilbert $\sB$-module $H$ is said
to be graded whenever, for all $v,w \in H$, $b \in \sB$, 
there is a linear map $\Gamma : H \to H$ such that
\[
\Gamma (vb) = (\Gamma v) \gamma'(b)
\ \ , \ \
(\Gamma v , \Gamma w) = \gamma'(v,w) \in \sB \ .
\]
Let now $\sA,\sB$ be graded $C_0(X)$-algebras, 
eventually endowed with the trivial grading. 
A {\em Kasparov} $\sA$-$\sB$-bimodule, denoted by $(\eta,\phi)$, is given by:
(i) a graded Hilbert $\sB$-bimodule $H$ carrying a graded representation
$\eta : \sA \to B(H)$
(here $B(H)$ is the $\rC^*$-algebra of adjointable, right $\sB$-module operators), 
such that
\[
\eta(fa)vb = \eta(a)v(fb)
\ \ , \ \
\forall v \in H
\ , \ 
a \in \sA
\ , \ 
b \in \sB
\ , \
f \in C_0(X)
\ .
\]
(ii) an operator $\phi \in B(H)$ such that
\begin{equation}
\label{eq.KK.khom}
(\phi - \phi^*)\eta(t) 
\ , \
(\phi^2 - 1)\eta(t)
\ , \
[\phi,\eta(t)] 
\ \in K(H)
\ \ , \ \
\forall t \in \sA
\ ,
\end{equation}
where $K(H) \subseteq B(H)$ is the ideal of compact $\sB$-module operators.
We denote the set of {\em Kasparov} $\sA$-$\sB$-bimodules by $E(\sA,\sB)$.
When $\sB = C_0(X)$, $H$ is, in essence, a (separable) continuous field of Hilbert spaces.
We say that $(\eta_0,\phi_0) , (\eta_1,\phi_1) \in E(\sA,\sB)$ are homotopic
whenever there is $(\eta,\phi) \in E(\sA,\sB \otimes C([0,1]))$ such that
$(\eta_0,\phi_0) , (\eta_1,\phi_1)$ are obtained by applying to $(\eta,\phi)$ 
the evaluation morphism 
$\sB \otimes C([0,1]) \to \sB$
over $0,1 \in [0,1]$ respectively.
The representable $KK$-theory $RKK(X;\sA,\sB)$ is defined as the abelian group
of homotopy classes of Kasparov $\sA$-$\sB$-bimodules w.r.t. the
operation of direct sum. 
In particular, we define
\[
RK^0(\sA) := RKK(X;\sA,C_0(X)) 
\ \ , \ \
RK_0(\sA) := RKK(X;C_0(X),\sA) 
\ ;
\]
these groups are called the {\em representable $K$-homology of $\sA$}
and, respectively, the {\em representable $K$-theory of $\sA$}. 
By \cite[Prop.2.21]{Kas88}, the Kasparov product induces the pairing
\begin{equation}
\label{index}
\left \langle \cdot , \cdot \right \rangle :
RK_0(\sA) \times RK^0(\sA) \to RK^0(X) := RK_0(C(X)) \ ,
\end{equation}
in essence the map defined by the index of continuous families of Fredholm operators.
When $X$ is compact $RK^0(X)$ is the topological $K$-theory (see \cite[Prop.2.20]{Kas88}).

%%*****************************************************************
%%*****************************************************************

\end{document}